\documentclass[a4paper,10pt]{article}
\usepackage{mystyle}
%opening
\title{Twisted Coefficients on coarse Spaces and their Corona}
\author{Elisa Hartmann}

\begin{document}

\maketitle

\begin{abstract}
To a metric space $X$ we associate a compact topological space $\corona X$ called the corona of $X$. Then a coarse map $f:X\to Y$ between metric spaces is mapped to a continuous map $\corona f:\corona X\to \corona Y$ between coronas. Sheaf cohomology on coarse spaces has been introduced in \cite{Hartmann2017a}. We show the functor $\nu'$ preserves and reflects sheaf cohomology.
\end{abstract}

\tableofcontents

\section{Introduction}

The corona of a proper metric space has been introduced in \cite{Protasov2003}. We show it suggests a duality between the coarse structure of a metric space and the topology of its boundary.

\subsection{Background and related Theories}

There are a number of dualities and natural equivalences between categories that relate different areas of mathematics.

The Gelfand duality relates topological spaces with their algebra of functions. Note that \cite{Brandenburg2015} gives a very concise introduction to Gelfand duality. Denote by $\locktop$ the category of locally compact Hausdorff spaces and proper\footnote{Note that a continuous map is \emph{proper} if the inverse image of any compact space is compact} continuous maps. Likewise we denote by $\commcstar$ the category of commutative $C^*$-algebras and nondegenerate $*-$homomorphisms. Then the gelfand representation 
 \[
  \gamma:\commcstar^{op}\to\locktop
 \]
is a fully faithful functor.

According to \cite[Chapter~1.3]{Khalkhali2009} Hilbert's Nullstellensatz implies there is an equivalence between the category of affine algebraic varieties over an algebraically closed field $F$ and the the opposite of the category of finitely generated commutative reduced unital $F$-algebras.

We introduce another duality in the realm of mathematics. Coarse geometry of metric spaces is related to the topology of compact spaces by designing yet another version of boundary. The new corona functor may not be an equivalence of categories, it is the closest we can get though.

In \cite{Kalantari2016},\cite{Kalantari2015} has been designed and studied a new relation on subsets of a coarse space: Two subsets $A,B\s X$ of a metric space are related by $\delta_\lambda$, written $A\delta_\lambda B$ if $A\close B$ or $A\cap B\not=\emptyset$. We know the close relation $\close$ from \cite{Hartmann2017b}. Now $\delta_\lambda$ is a proximity relation on subsets of a asymptotically normal\footnote{The notion \emph{asymptotically normal} has been defined in \cite{Kalantari2016}.} coarse spaces. The boundary of the Smirnov compactification associated to the proximity space $(X,\delta_\lambda)$ has been proven in \cite{Kalantari2016} to be homeomorphic to the Higson corona $\nu X$ if $X$ is a proper metric space.

In \cite{Protasov2015} has been associated to a metric space $X$ a corona $\check X$ as a quotient of the Stone-\v Chech compactification. The boundary has been studied in various papers \cite{Protasov2003},\cite{Protasov2005},\cite{Protasov2011},\cite{Banakh2013},\cite{Protasov2015}. Again if $X$ is a proper metric space then $\check X$ is homeomorphic to the Higson corona $\nu(X)$. 

This paper introduces a boundary on coarse metric spaces which is a functor that maps coarse maps to continuous maps between compact spaces. We show it suggests a duality since sheaf cohomology on coarse spaces is isomorphic to the sheaf cohomology of its boundary. In designing this functor we looked specifically for this property. We obtain a boundary which is yet again homeomorhic to the Higson corona if $X$ is proper.

If $X$ is a coarsely proper metric space all three definitions of corona, the one introduced in \cite{Kalantari2016} called $\gamma X/\approx$, the one in \cite{Protasov2015} called $\check X$ and in the one in this paper called $\corona X$ can be proven to be equivalent. 

\subsection{Main Contributions}
To every proximity space we can associate a compact space which arises as the boundary of the Smirnov compactification. The close relation $\close$ on subsets of a metric space is almost but not quite a proximity relation. We can still mirror the construction of the Smirnov compactification as described in \cite{Naimpally1970}.

The Definition~\ref{defn:cu} of coarse ultrafilters and Definition~\ref{defn:asymptoticallyalikecu} of the relation asymptotically alike on coarse ultrafilters combine to a coarse version of a cluster on $(X,\close)$. Note that we do not change much. In Lemma~\ref{lem:cucoarsemap}, Lemma~\ref{lem:closesamemap} we show that coarse maps preserve coarse ultrafilters modulo asymptotically alike.

In Definition~\ref{defn:topologyoncu} we define a topology on coarse ultrafilters modulo asymptotically alike. We call the resulting space the corona $\corona X$ of a metric space $X$. Note again this mirrors the topology on clusters in a Smirnov compactification.

Then Lemma~\ref{lem:fE(f)} shows that $\nu'$ is a functor:
\begin{thma}
Denote by $\mcoarse$ the category with metric spaces as objects and coarse maps modulo close as morphisms. By $\topology$ denote the category of topological spaces and continuous maps. Then 
\[
\nu':\mcoarse\to \topology
\]
is a functor. If $X$ is a metric space then $\corona X$ is compact.
\end{thma}

Note we are not able to show that $\corona X$ is metrizable in general. In fact Remark~\ref{rem:metrizable} and Proposition~\ref{prop:countablebase} suggest the opposite.

The Lemma~\ref{lem:coarsecovertopology} studies the topology of the corona $\corona X$ of a metric space $X$ and shows that open covers of $\corona X$ can be refined by open covers that are induced by coarse covers on $X$. The Corollary~\ref{cor:cor1}, Corollary~\ref{cor:cor2}, Corollary~\ref{cor:cor3}, Corollary~\ref{cor:cor4}, Corollary~\ref{cor:cor5} show this is a powerful property. Among them is:    

\begin{thma}
Let $X$ be a metric space. If $\sheaff$ is a sheaf on $X$ then
\[
\cohomology i X \sheaff = \cohomology i {\corona X} \sheaff
\]
here the left side denotes sheaf cohomology on coarse spaces and the right side denotes sheaf cohomology on a topological space. Likewise if $\sheaff$ is a sheaf on $\corona X$ the same statement holds. 
\end{thma}

The Lemma~\ref{lem:closedembedding} shows that every coarsely injective map induces a closed embedding between coronas. Conversely Theorem~\ref{thm:surjectivecoarselysurjective} shows that $\nu'$ reflects epimorphisms.

\section{Metric Spaces}
\label{sec:metric}
\begin{defn}
 Let $(X,d)$ be a metric space. Then the \emph{coarse structure associated to $d$} on $X$ consists of those subsets $E\s X^2$ for which
 \[
  \sup_{(x,y)\in E}d(x,y)<\infty.
 \]
 We call an element of the coarse structure \emph{entourage}.  In what follows we assume the metric $d$ to be finite for every $(x,y)\in X^2$.
\end{defn}

\begin{defn}
 If $X$ is a metric space a subset $B\s X$ is \emph{bounded} if the set $B^2$ is an entourage in $X$.
\end{defn}

\begin{defn}
 A map $f:X\to Y$ between metric spaces is called \emph{coarse} if
 \begin{itemize}
  \item $E\s X^2$ being an entourage implies that $\zzp f E$ is an entourage \emph{(coarsely uniform)};
  \item and if $A\s Y$ is bounded then $\iip f A$ is bounded \emph{(coarsely proper)}.
 \end{itemize}
 Two maps $f,g:X\to Y$ between metric spaces are called \emph{close} if
 \[
 f\times g(\Delta_X)
 \]
 is an entourage in $Y$. Here $\Delta_X$ denotes the diagonal in $X$.
\end{defn}

\begin{notat}
 A map $f:X\to Y$ between metric spaces is called
 \begin{itemize}
 \item \emph{coarsely surjective} if there is an entourage $E\s Y^2$ such that 
 \[
  E[\im f]=Y;
 \]
 \item \emph{coarsely injective} if for every entourage $F\s Y^2$ the set $\izp f F$ is an entourage in $X$.
  \item two subsets $A,B\s X$ are called \emph{coarsely disjoint} if for every entourage $E\s X^2$ the set
  \[
  E[A]\cap E[B]
  \]
  is bounded.
\end{itemize}
\end{notat}

\begin{rem}
 We study metric spaces up to coarse equivalence. A coarse map $f:X\to Y$ is a \emph{coarse equivalence} if
 \begin{itemize}
  \item There is a coarse map $g:Y\to X$ such that $f\circ g$ is close to $id_Y$ and $g\circ f$ is close to $id_X$.
 \item or equivalently if $f$ is both coarsely injective and coarsely surjective.
 \end{itemize}
\end{rem}

This is~\cite[Definition~3.D.10]{Cornulier2016}:
\begin{defn}\name{coarsely proper}
 If $X$ is a metric space we write
 \[
  B(p,r)=\{x\in X:d(x,p)\le r\}
 \]
 for a point $p\in X$ and $r\ge 0$. The space $X$ is called \emph{coarsely proper} if there is some $R_0>0$ such that for every bounded subset $B\s X$ the cover
\[
 \bigcup_{x\in B}B(x,R_0)
\]
of $B$ has a finite subcover.
\end{defn}

\section{The close Relation and coarse Covers}

This is \cite[Definition~9]{Hartmann2017b}.
\begin{defn}\name{close relation}
\label{defn:closerelation}
 Let $X$ be a coarse space. Two subsets $A,B\s X$ are called \emph{close} if they are not coarsely disjoint. We write 
 \[
  A\close B.
 \]
Then $\close$ is a relation on the subsets of $X$.
\end{defn}

\begin{lem}
\label{lem:close}
 In every metric space $X$:
 \begin{enumerate}
  \item if $B$ is bounded, $B\notclose A$ for every $A\s X$
  \item $U\close V$ implies $V\close U$
  \item $U\close (V\cup W)$ if and only if $U\close V$ or $U\close W$
  \item for every subspaces $A,B\s X$ with $A\notclose B$ there are subsets $C,D\s X$ such that $C\cap D=\emptyset$ and $A\notclose (X\ohne C)$, $B\notclose X\ohne D$.
 \end{enumerate}
\end{lem}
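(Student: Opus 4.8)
The plan is to peel parts (1)--(3) off as formal consequences of Definition~\ref{defn:closerelation} and then spend the effort on (4). Recall that $A\close B$ means there is an entourage $E\s X^2$ with $E[A]\cap E[B]$ unbounded, and keep in mind two trivialities: if $B$ is bounded and $E$ an entourage then $E[B]$ is bounded (its diameter is at most $\operatorname{diam}(B)$ plus twice the width of $E$), and for any $E$ one has $E[V\cup W]=E[V]\cup E[W]$ while a union of two bounded sets is bounded. For (1): if $B$ is bounded then $E[A]\cap E[B]\s E[B]$ is bounded for every entourage $E$, hence $B$ and $A$ are coarsely disjoint, i.e. $B\notclose A$. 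For (2): the defining condition is symmetric in the two sets, so $\close$ is symmetric.

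For (3), the implication from $U\close V$ (or $U\close W$) to $U\close(V\cup W)$ is monotonicity: since $E[V]\s E[V\cup W]$ and $E[W]\s E[V\cup W]$, an entourage witnessing $U\close V$ or $U\close W$ also witnesses $U\close(V\cup W)$. Conversely, if $E[U]\cap E[V\cup W]$ is unbounded, then it equals $(E[U]\cap E[V])\cup(E[U]\cap E[W])$, so one of these two pieces is unbounded, giving $U\close V$ or $U\close W$.

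Part (4) is the real content --- a coarse analogue of the normality of metric spaces. Given $A\notclose B$, I would put
\[
C=\{x\in X:d(x,A)\le d(x,B)\},\qquad D=X\ohne C,
\]
with $d(x,S)=\inf_{s\in S}d(x,s)$ and the convention $d(x,\emptyset)=\infty$; then $C\cap D=\emptyset$, while $X\ohne C=D$ and $X\ohne D=C$, so it suffices to show $A\notclose D$ and $B\notclose C$. For the first, fix an entourage $E$ of width $r=\sup_{(x,y)\in E}d(x,y)$ and take $y\in E[A]\cap E[D]$, say $d(a,y)\le r$ with $a\in A$ and $d(z,y)\le r$ with $z\in D$. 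Then $d(z,A)\le d(z,a)\le 2r$, so $d(z,B)<d(z,A)\le 2r$ because $z\in D$, and we may choose $b\in B$ with $d(z,b)\le 2r+1$. Hence $d(y,a)\le r$ and $d(y,b)\le d(y,z)+d(z,b)\le 3r+1$, so $y\in F[A]\cap F[B]$ for the entourage $F=\{(x,x'):d(x,x')\le 3r+1\}$. Thus $E[A]\cap E[D]\s F[A]\cap F[B]$, which is bounded since $A\notclose B$; as $E$ was arbitrary, $A\notclose D$. The assertion $B\notclose C$ follows from the same computation with $A$ and $B$ interchanged, now using that $z\in C$ gives $d(z,A)\le d(z,B)$.

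I expect the only point needing care to be the estimate in (4): one must verify that the auxiliary entourage $F$ extracted from $E$ depends on $E$ alone (it does, through $r$), so that the hypothesis $A\notclose B$ can be applied uniformly over all $E$. The choice of $C$ as the ``coarse Urysohn set'' $\{d(\cdot,A)\le d(\cdot,B)\}$ is exactly what forces a point of $D$ lying close to $A$ to lie close to $B$ as well, which drives the whole estimate; the rest is bookkeeping with the triangle inequality.
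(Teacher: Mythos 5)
Your proposal is correct. The paper itself gives no argument for this lemma --- it simply cites \cite[Lemma~10, Proposition~11]{Hartmann2017b} --- so there is nothing internal to compare against; what you have written is a valid self-contained proof. Parts (1)--(3) are exactly the formal consequences of the definition that you describe: $E[B]$ bounded for $B$ bounded gives (1), symmetry of the definition gives (2), and the identity $E[V\cup W]=E[V]\cup E[W]$ together with ``a union of two bounded sets is bounded'' gives both directions of (3). For (4), the Urysohn-type set $C=\{x\in X:d(x,A)\le d(x,B)\}$ with $D=X\ohne C$ does the job, and your estimate is sound: the witnesses $a\in A$, $z\in D$ for $y\in E[A]\cap E[D]$ force $d(z,B)<d(z,A)\le 2r$, hence a point $b\in B$ within $2r+1$ of $z$, so that $E[A]\cap E[D]\s F[A]\cap F[B]$ for the entourage $F$ of width $3r+1$; since $F$ depends only on $r$ (i.e.\ only on $E$) and $A\notclose B$ makes $F[A]\cap F[B]$ bounded, the conclusion $A\notclose D$ follows, and $B\notclose C$ is the symmetric computation. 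The small slack ($2r+1$ rather than $2r$) absorbs the fact that $d(z,B)$ is an infimum, and the empty-set convention $d(x,\emptyset)=\infty$ handles the degenerate cases. This is the standard ``coarse normality'' argument one would expect to find in the cited reference, so your route is almost certainly the intended one.
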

\begin{proof}
This is \cite[Lemma~10, Proposition~11]{Hartmann2017b}.
\end{proof}

\begin{rem}
 \label{rem:coarsecontinuous}
 If $f:X\to Y$ is a coarse map then whenever $A\close B$ in $X$ then $f(A)\close f(B)$ in $Y$.
\end{rem}

We recall~\cite[Definition~45]{Hartmann2017a}:
\begin{defn}\name{coarse cover}
 If $X$ is a metric space and $U\s X$ a subset a finite family of subsets $U_1,\ldots,U_n\s U$ is said to \emph{coarsely cover} $U$ if for every entourage $E\s X^2$ there exists a bounded set $B\s X$ such that 
\[
 U^2\cap (\bigcup_i U_i^2)^c\cap E\s B^2.
\]
\end{defn}

\begin{rem}
 Note that coarse covers determine a Grothendieck topology on $X$. If $f:X\to Y$ is a coarse map between metric spaces and $(V_i)_i$ a coarse cover of $V\s Y$ then $(\iip f{V_i})_i$ is a coarse cover of $\iip f V\s X$.
\end{rem}

\begin{lem}
\label{lem:pcover}
 Let $X$ be a metric space. A finite family $\ucover=\{U_\alpha:\alpha\in A\}$ is a coarse cover if and only if there is a finite cover $\vcover=\{V_\alpha:\alpha\in A\}$ of $X$ as a set such that $V_\alpha\notclose U_\alpha^c$ for every $\alpha$. 
\end{lem}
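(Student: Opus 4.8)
The statement is an "if and only if" characterizing coarse covers $\ucover=\{U_\alpha:\alpha\in A\}$ of $X$ by the existence of a finite set-theoretic cover $\vcover=\{V_\alpha:\alpha\in A\}$ with $V_\alpha\notclose U_\alpha^c$ for every $\alpha$. I would prove the two directions separately, and in both directions the translation dictionary is: "$S\notclose T$" means "for every entourage $E$ the set $E[S]\cap E[T]$ is bounded," and the defining inequality of a coarse cover rewrites the "bad set" $U^2\cap(\bigcup_i U_i^2)^c\cap E$ (with $U=X$) in terms of pairs $(x,y)\in E$ with $x,y$ lying in no common $U_\alpha$.

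For the direction ($\Leftarrow$), assume such a cover $\vcover$ exists. Fix an entourage $E$; by symmetrizing and enlarging I may assume $E$ is symmetric and contains the diagonal. I want to bound $X^2\cap(\bigcup_\alpha U_\alpha^2)^c\cap E$. Take $(x,y)\in E$ with $(x,y)\notin U_\alpha^2$ for all $\alpha$. Since $\vcover$ covers $X$, there is some $\alpha$ with $x\in V_\alpha$; then $y\notin U_\alpha$ (else $(x,y)\in V_\alpha\times U_\alpha\subseteq$... wait, we need $x\in U_\alpha$ too) — more carefully: from $x\in V_\alpha$ and $(x,y)\notin U_\alpha^2$ we get that at least one of $x,y$ is outside $U_\alpha$; using $V_\alpha\notclose U_\alpha^c$ together with the entourage $E$ one shows $x$ (hence $y=E$-close to $x$) must lie in a bounded set. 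Summing the finitely many bounded sets obtained as $\alpha$ ranges over $A$ yields a single bounded $B$ with the bad set contained in $B^2$ (after one more enlargement by $E$), which is the definition of a coarse cover. The bookkeeping here — getting both coordinates into one bounded set — is routine once one is careful to enlarge entourages by $E$ a bounded number of times.

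For the direction ($\Rightarrow$), assume $\ucover$ is a coarse cover. I want to manufacture the $V_\alpha$. The natural candidate is to set $V_\alpha$ to be $U_\alpha$ together with some controlled "error region," chosen so that the $V_\alpha$ still cover $X$ but stay coarsely away from $U_\alpha^c$. Concretely, enumerate $A=\{1,\dots,n\}$ and define $V_\alpha = U_\alpha\setminus\bigcup_{\beta<\alpha}U_\beta$ adjusted on a bounded set, or better: use part (4) of Lemma~\ref{lem:close} iteratively. Actually the cleanest route is: since $\ucover$ coarsely covers $X$, for a fixed large entourage the bad set is bounded, say contained in $B^2$; then $\{U_\alpha\}$ together with the bounded set $B$ covers $X$ as a set up to... and one distributes $B$ among the pieces. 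One then checks $V_\alpha\notclose U_\alpha^c$ by unwinding: any entourage $E$, the set $E[V_\alpha]\cap E[U_\alpha^c]$ sits inside (a bounded enlargement of) the bad set for $E$, which is bounded by hypothesis. This is where I expect the main obstacle: choosing the $V_\alpha$ so that they genuinely cover all of $X$ (not just up to a bounded set) while each remains coarsely disjoint from $U_\alpha^c$; the bounded "leftover" has to be absorbed into one of the $V_\alpha$'s without destroying its coarse disjointness from $U_\alpha^c$ — but since bounded sets are coarsely disjoint from everything (Lemma~\ref{lem:close}(1)) and $\notclose$ interacts well with finite unions (Lemma~\ref{lem:close}(3)), throwing a bounded set into $V_\alpha$ is harmless. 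Once that is arranged, both implications close up.
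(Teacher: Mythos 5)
Note first that the paper does not prove this lemma itself; it only cites \cite[Lemma~16]{Hartmann2017b}, so there is no in-paper argument to compare yours against line by line. Your direction ($\Leftarrow$) is essentially correct: for $(x,y)$ in the bad set of a symmetric entourage $E$ containing the diagonal, choosing $\alpha$ with $x\in V_\alpha$ forces $x\in E[V_\alpha]\cap E[U_\alpha^c]$ in either case of the dichotomy ``$x\notin U_\alpha$ or $y\notin U_\alpha$'', and the finitely many bounded sets obtained as $\alpha$ ranges over $A$ assemble (after one enlargement by $E$) into a single bounded $B$ with the bad set contained in $B^2$.

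The direction ($\Rightarrow$), however, has a genuine gap. Both of your candidate constructions --- $V_\alpha=U_\alpha$ plus a controlled error region, or $V_\alpha=U_\alpha\ohne\bigcup_{\beta<\alpha}U_\beta$ adjusted on a bounded set --- fail, and the verification you sketch for them rests on a false claim. Take $X$ the integers with the usual metric, $U_1$ the even integers and $U_2=X$. This is a coarse cover (already $U_2$ alone is one), yet $U_1\close U_1^c$, since every odd integer lies at distance $1$ from an even one; hence $V_1=U_1$, and indeed any $V_1$ differing from $U_1$ by a bounded set, violates $V_1\notclose U_1^c$. Your proposed check that $E[V_\alpha]\cap E[U_\alpha^c]$ sits inside a bounded enlargement of the bad set for $E$ is false here: for $E$ the set of pairs at distance at most $1$, the left-hand side is all of $X$ while the bad set is empty. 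The real obstacle is not the bounded leftover $X\ohne\bigcup_\alpha U_\alpha$ (which, as you say, is harmless) but that $U_\alpha$ itself may be close to $U_\alpha^c$, so the $V_\alpha$ cannot in general be obtained from the $U_\alpha$ by bounded modifications. What is needed is a coarse analogue of the shrinking lemma for open covers of normal spaces, built on the normality property of Lemma~\ref{lem:close}(4): for $n=2$ one first shows that a coarse cover $\{U_1,U_2\}$ satisfies $U_1^c\notclose U_2^c$ (any point of $E[U_1^c]\cap E[U_2^c]$ produces a pair in the bad set for $E\circ E$), then Lemma~\ref{lem:close}(4) yields disjoint $C,D$ with $U_1^c\notclose C^c$ and $U_2^c\notclose D^c$, and $V_1=C^c$, $V_2=D^c$ cover $X$; the general case needs an induction in which a set may be shrunk all the way to $\emptyset$ (as $V_1$ must be in the example above), not merely trimmed by a bounded amount.
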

\begin{proof}
This is \cite[Lemma~16]{Hartmann2017b}.
\end{proof}

Recall \cite[Definition~2.1]{Kalantari2016}:
\begin{defn}
Two subsets of a metric space $S,T\s X$ are called \emph{asymptotically alike} if there is an entourage $E\s X^2$ such that $E[S]=T$. We write $S\lambda T$ in this case.
\end{defn}

\section{Coarse Ultrafilters}

\begin{defn}
\label{defn:cu}
If $X$ is a metric space a system $\sheaff$ of subsets of $X$ is called a \emph{coarse ultrafilter} if
\begin{enumerate}
\item $A,B\in \sheaff$ then $A\close B$.
\item $A,B\s X$ are subsets with $A\cup B\in\sheaff$ then $A\in\sheaff$ or $B\in\sheaff$.
\item $X\in\sheaff$.
\end{enumerate}
\end{defn}

\begin{lem}
\label{lem:cubasicprops}
If $X$ is a metric space and $\sheaff$ a coarse ultrafilter on $X$ and if $A\not\in \sheaff$ then $A^c\in \sheaff$.
\end{lem}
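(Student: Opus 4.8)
The plan is to prove the contrapositive in a disguised form: suppose $A \notin \sheaff$ and $A^c \notin \sheaff$ and derive a contradiction. The only tool available for this is axiom (2) of Definition~\ref{defn:cu}, applied to the decomposition $X = A \cup A^c$. Since $X \in \sheaff$ by axiom (3), axiom (2) with $B = A^c$ forces $A \in \sheaff$ or $A^c \in \sheaff$. This immediately contradicts the assumption that neither belongs to $\sheaff$, so at least one of them does; and if $A \notin \sheaff$ then it must be $A^c$.

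First I would state $X = A \cup A^c$ as a set decomposition. Then I would invoke axiom (3) to get $X \in \sheaff$, hence $A \cup A^c \in \sheaff$. Applying axiom (2) to the two subsets $A$ and $A^c$ yields $A \in \sheaff$ or $A^c \in \sheaff$. Under the hypothesis $A \notin \sheaff$ the first disjunct fails, so $A^c \in \sheaff$, which is exactly the claim.

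I do not anticipate any real obstacle here; this is essentially the standard "ultrafilter contains a set or its complement" argument, and all three axioms are used in a completely transparent way (axiom (1) is not even needed). The only thing to double-check is that the notation $A^c$ means the complement $X \ohne A$ taken inside $X$, so that $A \cup A^c = X$ on the nose — this is the implicit convention in the paper, consistent with its use of $U^c$ in the definition of coarse cover. Once that is granted, the proof is a two-line deduction from axioms (2) and (3).
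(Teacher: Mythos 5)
Your proof is correct and is essentially the paper's argument: both use the decomposition $X = A \cup A^c$ together with axioms (2) and (3), the only cosmetic difference being that you apply axiom (2) directly while the paper phrases it as a contradiction (assuming both $A, A^c \notin \sheaff$ and concluding $X \notin \sheaff$).
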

\begin{proof}
 Assume the opposite, both $A,A^c\not\in\sheaff$. Then $X=A\cup A^c\not\in \sheaff$ which is a contradiction to axiom 3.
\end{proof}

\begin{lem}
\label{lem:cucoarsemap}
If $f:X\to Y$ is a coarse map between metric spaces and $\sheaff$ is a coarse ultrafilter on $X$ then
\[
f_*\sheaff:=\{A\s Y:\iip f A\in\sheaff\}
\]
is a coarse ultrafilter on $Y$.
\end{lem}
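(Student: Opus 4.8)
The plan is to verify the three conditions of Definition~\ref{defn:cu} for the system $f_*\sheaff$, and the only one requiring any work is the first.

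I would begin with conditions~2 and~3, which are purely set-theoretic. Since $\iip f Y=X$, condition~3 for $\sheaff$ immediately gives $Y\in f_*\sheaff$. For condition~2, if $A\cup B\in f_*\sheaff$ then $\iip f A\cup\iip f B=\iip f{A\cup B}\in\sheaff$, so condition~2 for $\sheaff$ forces $\iip f A\in\sheaff$ or $\iip f B\in\sheaff$, i.e.\ $A\in f_*\sheaff$ or $B\in f_*\sheaff$. Nothing beyond compatibility of preimages with unions and the identity $\iip f Y=X$ is used here.

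The substance is condition~1. Given $A,B\in f_*\sheaff$ we have $\iip f A,\iip f B\in\sheaff$, so condition~1 for $\sheaff$ yields $\iip f A\close\iip f B$ in $X$. Since $f$ is coarse, Remark~\ref{rem:coarsecontinuous} then gives $f(\iip f A)\close f(\iip f B)$ in $Y$. Now $f(\iip f A)\s A$ and $f(\iip f B)\s B$, so I only need that $\close$ is monotone under enlarging either side: if $S\close T$ and $S\s S'$ then, writing $S'=S\cup(S'\ohne S)$ and invoking part~3 of Lemma~\ref{lem:close} (together with the symmetry in part~2), one gets $S'\close T$, and the same argument handles the second coordinate. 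Chaining these two enlargements turns $f(\iip f A)\close f(\iip f B)$ into $A\close B$, which is condition~1.

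I do not expect a genuine obstacle: the argument is a formal manipulation of the coarse-ultrafilter axioms together with the fact (Remark~\ref{rem:coarsecontinuous}) that coarse maps push forward the close relation. The one point worth a sentence is that the empty set never belongs to a coarse ultrafilter --- it is bounded, hence coarsely disjoint from itself by part~1 of Lemma~\ref{lem:close}, contradicting condition~1 --- so $\iip f A$ and $\iip f B$ above are nonempty and all the manipulations are legitimate.
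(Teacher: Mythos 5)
Your proof is correct and takes essentially the same route as the paper's: conditions 2 and 3 are the same preimage formalities (the paper phrases condition 2 contrapositively), and condition 1 is the same reduction of $A\close B$ to $\iip f A\close \iip f B$. The paper asserts the implication ``$\iip f A\close \iip f B$ implies $A\close B$'' without justification; your filling-in via Remark~\ref{rem:coarsecontinuous} together with monotonicity of $\close$ under enlargement (from parts 2 and 3 of Lemma~\ref{lem:close}) is exactly the right way to supply that step.
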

\begin{proof}
\begin{enumerate}
\item If $A,B\in f_*\sheaff$ then $\iip f A,\iip f B\in\sheaff$. This implies $\iip f A\close \iip f B$ which implies $A\close B$. 
\item If $A,B\s Y$ are subsets with $A, B\not\in f_*\sheaff$ then $\iip f A, \iip f B\not\in\sheaff$. Which implies $\iip f A\cup\iip f B\not\in\sheaff$. Thus $A\cup B\not\in\sheaff$.
\item $\iip f Y=X$. Thus $Y\in f_*\sheaff$.
\end{enumerate}
\end{proof}

\begin{thm}
\label{thm:cuexists}
If $X$ is a coarsely proper metric space and $Z\s X$ an unbounded subset then there is a coarse ultrafilter $\sheaff$ on $X$ with $Z\in\sheaff$.
\end{thm}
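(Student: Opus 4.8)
The plan is to construct the desired coarse ultrafilter as an ordinary set-theoretic ultrafilter on $X$; this only uses that $Z$ is unbounded, not that $X$ is coarsely proper. First I would consider the family
\[
\Phi:=\{Z\}\cup\{\,X\ohne B:B\s X\text{ bounded}\,\}
\]
of subsets of $X$ and check that it has the finite intersection property: a finite intersection of members of $\Phi$ has the form $Z\ohne(B_1\cup\cdots\cup B_n)$, and since a finite union of bounded sets is bounded while $Z$, being unbounded, is contained in no bounded set, this intersection is nonempty. By the ultrafilter lemma there is then an ultrafilter $\omega$ on the set $X$ with $\Phi\s\omega$, and I claim $\sheaff:=\omega$, viewed as a system of subsets of $X$, is a coarse ultrafilter; since $Z\in\Phi\s\sheaff$ it contains $Z$.

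Next I would verify the three axioms of Definition~\ref{defn:cu}. Axiom 3 holds because $X\in\omega$. Axiom 2 is primeness of $\omega$: if $A\cup B\in\omega$ and $A\notin\omega$, then $X\ohne A\in\omega$, so $(A\cup B)\cap(X\ohne A)=B\ohne A$ lies in $\omega$, whence $B\in\omega$ by upward closure. For axiom 1, let $A,B\in\sheaff$. Then $A\cap B\in\omega$ since ultrafilters are closed under finite intersections, and $A\cap B$ must be unbounded, for otherwise $X\ohne(A\cap B)\in\Phi\s\omega$ would contradict $A\cap B\in\omega$. Taking the diagonal $\Delta_X$, which is always an entourage, we get $\Delta_X[A]\cap\Delta_X[B]=A\cap B$ unbounded, so $A$ and $B$ are not coarsely disjoint, that is, $A\close B$ by Definition~\ref{defn:closerelation}.

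The substantive point is axiom 1, and the mechanism making the argument work is precisely that forcing all cobounded sets into $\omega$ keeps the pairwise intersections of members unbounded, so closeness follows for free from the diagonal entourage; there is no real obstacle beyond noticing that a genuine ultrafilter (rather than a merely maximal ``close'' system) is the right object to build. If one preferred a more geometric argument that genuinely invokes coarse properness — the hypothesis that becomes relevant later in the corona construction — one could instead apply Zorn's lemma to the poset of systems of subsets of $X$ containing $Z$ and satisfying axiom 1, ordered by inclusion, and use the finite-subcover property of the balls $B(x,R_0)$ to see that a maximal such system is prime; but the ultrafilter construction above bypasses this and shows coarse properness is not actually needed for the statement as given.
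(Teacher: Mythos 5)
Your proof is correct, and it takes a genuinely different route from the paper's. The paper first reduces to constructing a coarse ultrafilter on $Z$ itself, invokes coarse properness to replace $Z$ by an $R$-discrete net on which bounded sets are finite, fixes a non-principal ultrafilter $\sigma$ there, and takes the \emph{cluster} $\{A : A\close C \text{ for all } C\in\sigma\}$ --- mirroring the Naimpally--Warrack construction of clusters from ultrafilters in a proximity space. You instead extend $\{Z\}\cup\{\text{cobounded sets}\}$ to an ordinary ultrafilter and observe that the diagonal entourage turns the unboundedness of $A\cap B$ into $A\close B$; each step checks out (finite unions of bounded sets are bounded under the paper's standing assumption that $d$ is everywhere finite, and $\Delta_X[A]\cap\Delta_X[B]=A\cap B$). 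What your argument buys is economy and generality: coarse properness is genuinely not needed for the bare existence statement, as you note. What the paper's construction buys is the extra structure exploited later: the object it produces is a maximal coarse ultrafilter (a cluster), and the explicit passage from non-principal ultrafilters to coarse ultrafilters set up in this proof is reused verbatim in Proposition~\ref{prop:higson} to identify $\corona X$ with the Protasov corona $\check X$ and hence with the Higson corona. Your ultrafilter $\omega$ is a perfectly valid witness for the theorem as stated, but it is not a priori maximal under the asymptotically-alike relation, so it would not substitute for the paper's construction in that later argument. One small caveat: your closing aside about a Zorn's lemma variant using the balls $B(x,R_0)$ is only sketched and would need more care about why a maximal ``close'' system is prime, but since it is explicitly offered as an alternative and not relied upon, it does not affect the validity of the main argument.
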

\begin{proof}
We just need to prove there is a coarse ultrafilter $\sheaff$ on $Z$. Then $i_*\sheaff$ where $i:Z\to X$ is the inclusion has the required properties.

By Proposition~\cite[Lemma~88]{Hartmann2017a} we can assume that $Z$ is $R$-discrete for some $R>0$. Then the bounded sets are exactly the finite sets. The rest of the proof is very similar to the proof of \cite[Theorem~5.8]{Naimpally1970}. Let $\sigma$ be a non-principal ultrafilter on $Z$ (Thus every $A\in\sigma$ is not finite). We define
\[
\sheaff:=\{A\s Z:A\close C\mbox{ for each }C\in\sigma\}
\]
We check that $\sheaff$ is a coarse ultrafilter on $Z$:
\begin{enumerate}
\item If $A,B\in\sheaff$ let $C\s X$ be a subset. Then either $C\in\sigma$ or $C^c\in\sigma$. This implies both $A\close C,B\close C$ or both $A\close C^c,B\close C^c$. Thus for every $C\s Z$ we have $C\close A$ or $C^c\close B$ this implies $A\close B$.
\item If $A,B\s Z$ are subsets with $A,B\not\in\sheaff$ then there are $C_1,C_2\in\sigma$ with $A\notclose C_1,B\notclose C_2$. Then
\[
A\cup B\notclose C_1\cap C_2.
\]
Since $C_1\cap C_2\in\sigma$ we have $A\cup B\not\in\sheaff$.
\item $Z\in \sheaff$ since $Z\close A$ for every nonbounded subset $A\s Z$.
\end{enumerate}
\end{proof}

\begin{defn}
\label{defn:asymptoticallyalikecu}
We define a relation on coarse ultrafilters on $X$: two coarse ultrafilters $\sheaff,\sheafg$ are \emph{asymptotically alike}, written $A\lambda B$ if for every $A\in \sheaff,B\in \sheafg$:
\[
A\close B
\]
\end{defn}
\begin{lem}
The relation asymptotically alike is an equivalence relation on coarse ultrafilters on $X$.
\end{lem}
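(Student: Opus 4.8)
The plan is to check the three properties of an equivalence relation separately; reflexivity and symmetry are one-liners, and essentially all the content sits in transitivity, which will use part~(4) of Lemma~\ref{lem:close} together with axiom~2 of Definition~\ref{defn:cu}.

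For reflexivity I would note that if $\sheaff$ is a coarse ultrafilter and $A,B\in\sheaff$, then $A\close B$ holds by axiom~1 of Definition~\ref{defn:cu}, so $\sheaff\lambda\sheaff$. For symmetry I would just invoke part~(2) of Lemma~\ref{lem:close}: if $A\close B$ holds for all $A\in\sheaff$ and $B\in\sheafg$, then $B\close A$ holds for all such pairs, hence $\sheafg\lambda\sheaff$.

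For transitivity, suppose $\sheaff\lambda\sheafg$ and $\sheafg\lambda\mathcal{H}$; I would argue by contradiction, assuming there exist $A\in\sheaff$ and $C\in\mathcal{H}$ with $A\notclose C$. By part~(4) of Lemma~\ref{lem:close} there are subsets $P,Q\s X$ with $P\cap Q=\emptyset$, $A\notclose(X\ohne P)$ and $C\notclose(X\ohne Q)$. The key observation is that $P\cap Q=\emptyset$ forces $(X\ohne P)\cup(X\ohne Q)=X$; since $X\in\sheafg$, axiom~2 of Definition~\ref{defn:cu} gives $X\ohne P\in\sheafg$ or $X\ohne Q\in\sheafg$. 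In the first case $\sheaff\lambda\sheafg$ yields $A\close(X\ohne P)$, contradicting $A\notclose(X\ohne P)$; in the second case $\sheafg\lambda\mathcal{H}$ yields $(X\ohne Q)\close C$, contradicting $C\notclose(X\ohne Q)$ (using symmetry of $\close$). So no such pair exists, i.e.\ $\sheaff\lambda\mathcal{H}$.

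The main obstacle is really just recognizing that the disjointness $P\cap Q=\emptyset$ is precisely what makes $X\ohne P$ and $X\ohne Q$ cover $X$, so that the ultrafilter dichotomy of axiom~2 can be applied to the middle ultrafilter $\sheafg$; after that the argument is pure bookkeeping. This mirrors the classical fact that clusters in a proximity space form an equivalence relation, cf.\ the construction in \cite{Naimpally1970}.
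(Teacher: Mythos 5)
Your proof is correct and follows essentially the same route as the paper: transitivity by contradiction, using part~(4) of Lemma~\ref{lem:close} to separate $A$ and $C$ by complementary sets whose union is $X$, and then the ultrafilter dichotomy applied to the middle ultrafilter $\sheafg$. You merely spell out the steps (reflexivity via axiom~1, the appeal to $X\in\sheafg$ and axiom~2) that the paper leaves implicit.
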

\begin{proof}
The relation is obviously symmetric and reflexiv. We show transitivity. Let $\sheaff_1,\sheaff_2,\sheaff_3$ be coarse ultrafilters on $X$ such that $\sheaff_1\lambda \sheaff_2$ and $\sheaff_2\lambda \sheaff_3$. We show $\sheaff_1\lambda \sheaff_3$. Assume the opposite. There are $A\in \sheaff_1$ and $B\in \sheaff_3$ such that $A\notclose B$. Then there are subsets $C,D\s X$ with $C^c\cup D^c=X$ and $C^c\notclose A,D^c\notclose B$. Now one of $C^c,D^c$ is in $\sheaff_2$. This contradicts $\sheaff_1\lambda\sheaff_2$ and $\sheaff_2\lambda \sheaff_3$. Thus transitivity follows.
\end{proof}

\begin{lem}
\label{lem:closesamemap}
If two coarse maps $f,g:X\to Y$ between metric spaces are close and $\sheaff,\sheafg$ are asymptotically alike coarse ultrafilters on $X$ then $f_*\sheaff\lambda g_*\sheafg$ in $Y$.
\end{lem}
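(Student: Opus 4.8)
The plan is to verify directly the definition of the relation $\lambda$ on coarse ultrafilters. Fix arbitrary $A\in f_*\sheaff$ and $B\in g_*\sheafg$; I must show $A\close B$ in $Y$. By definition of the pushforward this means $\iip f A\in\sheaff$ and $\iip g B\in\sheafg$, and since $\sheaff$ and $\sheafg$ are asymptotically alike, Definition~\ref{defn:asymptoticallyalikecu} gives $\iip f A\close\iip g B$ in $X$.

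Next I transport this closeness to $Y$ along the maps. Since $f$ is a coarse map, Remark~\ref{rem:coarsecontinuous} yields $f(\iip f A)\close f(\iip g B)$ in $Y$. Now $f(\iip f A)\s A$, and $\close$ is monotone in each of its arguments: if $C\s C'$ and $C\close D$ then, writing $C'=C\cup(C'\ohne C)$, parts~(2) and~(3) of Lemma~\ref{lem:close} give $C'\close D$. Hence $A\close f(\iip g B)$.

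It remains to pass from $f(\iip g B)$ to $B$, and this is where closeness of $f$ and $g$ enters. Fix an entourage $E\s Y^2$ with $f(x)\in E[\{g(x)\}]$ for all $x\in X$ (for instance the symmetrization of the entourage $f\times g(\Delta_X)$). For $x\in\iip g B$ we have $g(x)\in B$, hence $f(x)\in E[B]$; so $f(\iip g B)\s E[B]$ and, by monotonicity once more, $A\close E[B]$. Finally $A\close E[B]$ implies $A\close B$: if an entourage $F\s Y^2$ witnesses $A\close E[B]$, that is, $F[A]\cap F[E[B]]$ is unbounded, then since $F[E[B]]$ is of the form $G'[B]$ for an entourage $G'$ (the composite of $F$ and $E$), the entourage $G:=F\cup G'$ satisfies $G[A]\cap G[B]\supseteq F[A]\cap F[E[B]]$, which is unbounded; so $A$ and $B$ are not coarsely disjoint. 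As $A$ and $B$ were arbitrary, $f_*\sheaff\lambda g_*\sheafg$.

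I do not expect a genuine obstacle here: the argument is a chase through the definitions of the pushforward of a coarse ultrafilter, of $\lambda$, and of $\close$. The only delicate points are the two elementary facts about entourages used above — that $\close$ is monotone, and that it is unchanged when one of the sets is enlarged by an entourage — and both reduce to Lemma~\ref{lem:close} together with closure of the coarse structure under composition and inversion.
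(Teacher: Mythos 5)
Your proof is correct, but it transports the closeness relation from $X$ to $Y$ by a genuinely different mechanism than the paper. Both arguments begin identically: fix $A\in f_*\sheaff$, $B\in g_*\sheafg$, deduce $\iip f A\in\sheaff$, $\iip g B\in\sheafg$, and hence $\iip f A\close \iip g B$. The paper then invokes the characterization (imported implicitly from its earlier work on the close relation) that closeness of two sets is witnessed by unbounded subsets $S\s \iip f A$, $T\s \iip g B$ that are asymptotically alike as sets, pushes these forward to get $f(S)\lambda g(T)$ with $f(S)\s A$, $g(T)\s B$ unbounded, and concludes $A\close B$; note this is where coarse properness of $f,g$ is used, to keep $f(S),g(T)$ unbounded. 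You instead apply only $f$ to obtain $f(\iip f A)\close f(\iip g B)$, absorb $f(\iip f A)$ into $A$ by monotonicity of $\close$ (correctly derived from parts (2) and (3) of Lemma~\ref{lem:close}), trap $f(\iip g B)$ inside $E[B]$ using the closeness entourage $E$ of $f$ and $g$, and finish with the entourage-stability of $\close$, verified directly from the definition of coarse disjointness via composition of entourages. Your route is more self-contained: it uses only the definition of $\close$ and Lemma~\ref{lem:close}, whereas the paper leans on the unstated fact that $\close$ is detected by asymptotically alike unbounded subsets. The paper's route is shorter on the page but carries that hidden dependency. One small remark: your argument as written never uses coarse properness of $f$ and $g$, which the paper's does; this is fine, since your entourage argument genuinely does not need it.
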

\begin{proof}
If $A\in f_*\sheaff,B\in g_*\sheafg$ then $\iip f A\in\sheaff,\iip g B\in\sheafg$. This implies $\iip f A \close \iip g B$. Thus there are subsets $S\s \iip f A,T\s \iip g B$ which are not bounded such that $S\lambda T$. Since $f,g$ are close we have $f(S)\lambda g(T)$. Now $f(S)\s A,g(T)\s B$ are not bounded since $f,g$ are coarsely proper. This implies $A\close B$. Thus $f_*\sheaff\lambda g_*\sheaff$ in $Y$.
\end{proof}

\begin{prop}
\label{prop:alikecu}
Let $\sheaff,\sheafg$ be two coarse ultrafilters on a metric space $X$. Then $\sheaff\lambda \sheafg$ if and only if for every $A\in\sheaff$ there is an element $B\in\sheafg$ with $A\lambda B$.
\end{prop}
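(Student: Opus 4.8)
The plan is to prove the two implications separately; $(\Leftarrow)$ is short and $(\Rightarrow)$ carries the weight. Throughout write $B(A,r)=\{x\in X:d(x,A)\le r\}$ for a subset $A\s X$ and $r\ge 0$, and recall that $A\notclose B$ means $A,B$ are coarsely disjoint.

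For $(\Leftarrow)$, let $A\in\sheaff$ and $B'\in\sheafg$ be arbitrary and choose $B\in\sheafg$ with $A\lambda B$, say $B=E[A]$ for an entourage $E$. Since $B$ and $B'$ both lie in $\sheafg$, the first axiom for a coarse ultrafilter gives $B\close B'$, so some entourage $F$ has $F[B]\cap B'$ unbounded. But $F[B]=F[E[A]]=(F\circ E)[A]$ and $F\circ E$ is again an entourage, hence $(F\circ E)[A]\cap B'$ is unbounded and $A\close B'$. As $A$ and $B'$ were arbitrary this is exactly $\sheaff\lambda\sheafg$, so this direction is essentially a composition-of-entourages computation.

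For $(\Rightarrow)$, assume $\sheaff\lambda\sheafg$ and fix $A\in\sheaff$. If $A$ is coarsely dense, say $X=B(A,R_0)$, then $X=E[A]$ for the entourage $E=\{(x,y):d(x,y)\le R_0\}$, so $A\lambda X$ and $B:=X\in\sheafg$ does the job. Otherwise $B(A,n)^c$ is unbounded for every $n$, and I argue by contradiction: suppose no thickening $E[A]$ lies in $\sheafg$. Since every $E[A]$ is contained in some $B(A,n)$, this gives $B(A,n)\notin\sheafg$, hence $B(A,n)^c\in\sheafg$ for all $n$ by Lemma~\ref{lem:cubasicprops}. The aim is now to exhibit a single $B^*\in\sheafg$ that is coarsely disjoint from $A$: because $A\in\sheaff$, such a $B^*$ contradicts $\sheaff\lambda\sheafg$, and the contradiction forces some $E[A]$ into $\sheafg$, which is the desired $B$ with $A\lambda B$.

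To produce $B^*$ I would use the representation of coarse ultrafilters underlying Theorem~\ref{thm:cuexists}: after passing to an $R$-discrete subspace on which $\sheafg$ is concentrated (so that bounded sets are exactly the finite ones, as in the proof of that theorem), realize $\sheafg$ as $\{B:B\close C\text{ for all }C\in\sigma\}$ for a set ultrafilter $\sigma$. Since $B(A,1)\notin\sheafg$, there is $C\in\sigma$ with $B(A,1)\notclose C$; for $t\ge 1$ one has $B(A,t)=E_{t-1}[B(A,1)]$ where $E_{t-1}=\{(x,y):d(x,y)\le t-1\}$, so coarse disjointness of $B(A,1)$ and $C$ makes $B(A,t)\cap C$ bounded for every $t$, i.e. $C$ is coarsely disjoint from $A$. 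Moreover $C\in\sheafg$, since for any $C'\in\sigma$ the set $C\cap C'\in\sigma$ is unbounded and therefore $C\close C'$. Thus $B^*:=C$ completes the contradiction. The main obstacle is exactly this reduction step, i.e. knowing that an arbitrary coarse ultrafilter on a metric space is, up to asymptotically alike, traced onto a suitable $R$-discrete subspace and represented through a set ultrafilter. A purely axiomatic attempt — peeling the shells $\{n<d(\cdot,A)\le n+1\}$ off the sets $B(A,n)^c$, or splitting $\sheafg$ against a diagonal set such as $\{x:d(x,A)>\lfloor\log_2(d(x,x_0)+1)\rfloor\}$ for a basepoint $x_0$ (whose subsets are all coarsely disjoint from $A$) — stalls because coarse ultrafilters are not closed under infinite intersections, so $\sheafg$ could in principle retreat indefinitely into ever thinner widening neighbourhoods of $A$ without ever meeting an escaping set or a thickening of $A$; it is the set-ultrafilter representation that excludes this.
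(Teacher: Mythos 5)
Your $(\Leftarrow)$ direction is complete and correct: given $A\in\sheaff$ and $B'\in\sheafg$, pick $B=E[A]\in\sheafg$ with $A\lambda B$; the first ultrafilter axiom forces $B\close B'$, and composing entourages transfers closeness from $B=E[A]$ back to $A$. For reference, the paper gives no argument of its own for this proposition — it simply cites \cite[Lemma~4.2]{Protasov2003} — so there is no in-paper proof to compare against; your attempt is a genuine reconstruction.

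The $(\Rightarrow)$ direction, however, has a real gap, and you have correctly located it yourself. The step ``realize $\sheafg$ as $\{B:B\close C\text{ for all }C\in\sigma\}$ for a set ultrafilter $\sigma$'' is not available from what is proved in the paper: Theorem~\ref{thm:cuexists} only goes in the existence direction (a set ultrafilter produces a coarse ultrafilter), not in the representation direction (an arbitrary coarse ultrafilter is induced by a set ultrafilter). The paper asserts such a representation only inside the proof of Proposition~\ref{prop:higson}, for coarsely proper spaces, by citing \cite[Theorem~5.8]{Naimpally1970} — but that theorem concerns clusters, which satisfy a maximality axiom ($A\close C$ for every $C$ in the cluster implies $A$ belongs to the cluster) that Definition~\ref{defn:cu} does not impose, so even there the citation does not apply verbatim. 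Moreover the proposition is stated for arbitrary metric spaces, where your preliminary reduction to an $R$-discrete subspace also fails. Granting the representation, the remainder of your argument is sound: choosing $C\in\sigma$ with $B(A,1)\notclose C$ does yield $C\notclose A$ and $C\in\sheafg$, contradicting $\sheaff\lambda\sheafg$. So the proof stands or falls with exactly the lemma you flag as ``the main obstacle,'' and as written the hard implication is not established; your own observation that a purely axiomatic argument stalls (because $\sheafg$ could retreat into ever thinner neighbourhoods of $A$ without containing any fixed thickening) is an honest description of why this cannot be waved away.
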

\begin{proof}
This has already been proved in \cite[Lemma~4.2]{Protasov2003}.
\end{proof}

\section{Topological Properties}

\begin{defn}
\label{defn:topologyoncu}
Let $X$ be a metric space. Denote by $\hat \nu (X)$ the set of coarse ultrafilters on $X$. We define a relation $\close$ on subsets of $\hat \nu(X)$ as follows. Define for a subset $A\s X$:
\[
\closedop A=\{\sheaff\in\hat \nu(X):A\in \sheaff\}
\]
Then $\pi_1\notclose \pi_2$ if and only if there exist subsets $A,B\s X$ such that $A\notclose B$ and $\pi_1\s \closedop A,\pi_2\s\closedop B$. 
\end{defn}

\begin{thm}
The relation $\close$ on $\hat \nu( X)$ is a proximity relation. If $\sheaff\in \hat \nu( X)$ Then
\[
\bar \sheaff=\{\sheafg\in \hat \nu( X):\sheafg\lambda \sheaff\}
\]
Thus $\close $ is a separated proximity relation on the quotient $\corona X:=\hat \nu( X)/\lambda$. We call $\corona X$ with the induced topology of $\close$ the \emph{corona of $X$}. 
\end{thm}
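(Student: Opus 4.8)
The plan is to verify that the relation $\close$ defined on $\hat\nu(X)$ in Definition~\ref{defn:topologyoncu} satisfies the axioms of an Efremovich proximity, and then to obtain the two remaining assertions almost formally. The axioms to check are: symmetry; $\pi\close(\rho\cup\sigma)$ iff $\pi\close\rho$ or $\pi\close\sigma$; $\emptyset\notclose\pi$; $\pi\cap\rho\neq\emptyset\Rightarrow\pi\close\rho$; and normality, $\pi\notclose\rho\Rightarrow\exists\,\omega\s\hat\nu(X)$ with $\pi\notclose\omega$ and $\rho\notclose(\hat\nu(X)\ohne\omega)$. Two facts about coarse ultrafilters are used repeatedly: every member of a coarse ultrafilter is unbounded (apply Definition~\ref{defn:cu}(1) with $A=B$, using Lemma~\ref{lem:close}(1)), so $\closedop\emptyset=\emptyset$ and, by Definition~\ref{defn:cu}(3), $\closedop X=\hat\nu(X)$; and Lemma~\ref{lem:cubasicprops}, which gives $\hat\nu(X)\ohne\closedop A\s\closedop{X\ohne A}$. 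Note also that $\close$ on $\hat\nu(X)$ is monotone by construction: shrinking $\pi$ or $\rho$ preserves $\pi\notclose\rho$. Each axiom will be deduced by transporting the corresponding clause of Lemma~\ref{lem:close} through the operator $A\mapsto\closedop A$.

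Symmetry follows from Lemma~\ref{lem:close}(2) and the symmetric shape of Definition~\ref{defn:topologyoncu}. For $\emptyset\notclose\pi$ take the witnesses $A=\emptyset$, $B=X$, using $\closedop\emptyset=\emptyset$, $\closedop X=\hat\nu(X)$ and $\emptyset\notclose X$ (Lemma~\ref{lem:close}(1)). For the intersection axiom, if $\sheaff\in\pi\cap\rho$ while $\pi\notclose\rho$ is witnessed by $A\notclose B$ with $\pi\s\closedop A$, $\rho\s\closedop B$, then $A,B\in\sheaff$, so $A\close B$ by Definition~\ref{defn:cu}(1) --- a contradiction.

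For normality, let $A\notclose B$ witness $\pi\notclose\rho$; apply Lemma~\ref{lem:close}(4) to get $C,D\s X$ with $C\cap D=\emptyset$, $A\notclose X\ohne C$, $B\notclose X\ohne D$, and put $\omega:=\closedop{X\ohne C}$. Then $(A,\ X\ohne C)$ witnesses $\pi\notclose\omega$; moreover $\hat\nu(X)\ohne\omega\s\closedop C$ by Lemma~\ref{lem:cubasicprops}, and $B\notclose C$ because $C\s X\ohne D$, so $(B,C)$ witnesses $\rho\notclose(\hat\nu(X)\ohne\omega)$. The distributivity axiom remains. One direction is pure monotonicity. For the converse, let $A_\rho\notclose B_\rho$ and $A_\sigma\notclose B_\sigma$ witness $\pi\notclose\rho$ and $\pi\notclose\sigma$. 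A first reduction works cleanly: for $\sheaff\in\pi$ one has $A_\rho,A_\sigma\in\sheaff$, so if $B_\rho\cup B_\sigma\in\sheaff$ then Definition~\ref{defn:cu}(2) forces $B_\rho\in\sheaff$ or $B_\sigma\in\sheaff$, whence by Definition~\ref{defn:cu}(1) $A_\rho\close B_\rho$ or $A_\sigma\close B_\sigma$, a contradiction; hence $\pi\s\closedop{X\ohne(B_\rho\cup B_\sigma)}$, and similarly $\rho\cup\sigma\s\closedop{X\ohne(A_\rho\cap A_\sigma)}$. I expect the last step --- turning this into a single admissible pair $(A,B)$ with $A\notclose B$, $\pi\s\closedop A$, $\rho\cup\sigma\s\closedop B$ --- to be the main obstacle: the two complements just produced are in general far too large to be coarsely disjoint, and the naive fix on the $\pi$-side (intersecting $A_\rho$ with $A_\sigma$) is unavailable because coarse ultrafilters need not be closed under intersecting members. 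Resolving it should require either an additional closure property of coarse ultrafilters (e.g.\ that they absorb supersets of their members) or a finer choice of witnesses, after which Lemma~\ref{lem:close}(1) and~(3) finish the verification.

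Once $\close$ is a proximity on $\hat\nu(X)$, the rest is formal. In the induced topology the closure of $S\s\hat\nu(X)$ is $\{\,\sheafg:\{\sheafg\}\close S\,\}$; for $S=\{\sheaff\}$, unwinding Definition~\ref{defn:topologyoncu} (and recalling that $\closedop A$ consists of the coarse ultrafilters having $A$ as a member) shows $\{\sheafg\}\notclose\{\sheaff\}$ means exactly that some $A\in\sheafg$ and $B\in\sheaff$ satisfy $A\notclose B$, i.e.\ that $\sheafg$ and $\sheaff$ are not asymptotically alike; hence $\bar\sheaff=\{\,\sheafg\in\hat\nu(X):\sheafg\lambda\sheaff\,\}$. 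In particular $\lambda$ is precisely the relation ``$\{\sheaff\}\close\{\sheafg\}$'' on points, which is an equivalence relation (already proved after Definition~\ref{defn:asymptoticallyalikecu}; in any proximity space it also follows from symmetry, the intersection axiom and normality). Therefore $\corona X=\hat\nu(X)/\lambda$ is the separated reflection of $(\hat\nu(X),\close)$: writing $q:\hat\nu(X)\to\corona X$ for the quotient map, the relation defined by $P\close Q$ iff $q^{-1}(P)\close q^{-1}(Q)$ (for $P,Q\s\corona X$) is a proximity on $\corona X$ that induces exactly the topology of Definition~\ref{defn:topologyoncu}, and it is separated since the only identifications a separated quotient must make --- collapsing points with the same closure --- have been performed.
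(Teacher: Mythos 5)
Your overall route coincides with the paper's: verify the proximity axioms for $\close$ on $\hat\nu(X)$ directly from Definition~\ref{defn:topologyoncu} using Lemma~\ref{lem:close} and Lemma~\ref{lem:cubasicprops}, then compute the point closures. Your symmetry, $\emptyset\notclose\pi$, intersection and normality arguments agree in substance with the paper's (the paper phrases normality in contrapositive form, you in direct form), and your identification of $\bar\sheaff$ with the $\lambda$-class of $\sheaff$ matches the paper's items 5 and 6. The one point of divergence is the union axiom $\pi\close(\rho\cup\sigma)\Leftrightarrow\pi\close\rho$ or $\pi\close\sigma$: the paper never checks it at all, whereas you attempt it and correctly flag that the naive witnesses cannot be merged, because coarse ultrafilters as defined are closed neither under supersets nor under intersections of members. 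That obstacle is genuine --- your proposal as written is incomplete at exactly this step --- but it can be overcome without adding any closure hypothesis to Definition~\ref{defn:cu}.

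The missing idea is to first upgrade each witness pair by applying Lemma~\ref{lem:close}(4) twice. From $A_i\notclose B_i$ obtain disjoint $C_i,D_i$ with $A_i\notclose C_i^c$ and $B_i\notclose D_i^c$; since $C_i\s D_i^c$, monotonicity (which follows from Lemma~\ref{lem:close}(3)) gives $B_i\notclose C_i$. Applying Lemma~\ref{lem:close}(4) again to $B_i\notclose C_i$ yields disjoint $E_i,F_i$ with $B_i\notclose E_i^c$ and $C_i\notclose F_i^c$, hence $C_i\notclose E_i$. Setting $A_i':=C_i$ and $B_i':=E_i$ you have $A_i\notclose (A_i')^c$, $B_i\notclose (B_i')^c$ and $A_i'\notclose B_i'$. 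Now for every $\sheaff\in\pi$ the sets $(A_1')^c,(A_2')^c$ fail to be close to the members $A_1,A_2$ respectively, so neither lies in $\sheaff$; by the contrapositive of Definition~\ref{defn:cu}(2) their union $(A_1'\cap A_2')^c$ is not in $\sheaff$, and Lemma~\ref{lem:cubasicprops} gives $A_1'\cap A_2'\in\sheaff$, i.e.\ $\pi\s\closedop{A_1'\cap A_2'}$. For every $\sheafg\in\rho\cup\sigma$ one has $(B_1'\cup B_2')^c\s (B_i')^c$ for the relevant $i$, so $(B_1'\cup B_2')^c$ is not close to the member $B_i$ and hence not in $\sheafg$, whence $B_1'\cup B_2'\in\sheafg$ and $\rho\cup\sigma\s\closedop{B_1'\cup B_2'}$. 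Finally $A_1'\cap A_2'\notclose B_1'$ and $A_1'\cap A_2'\notclose B_2'$ by monotonicity, so $A_1'\cap A_2'\notclose B_1'\cup B_2'$ by Lemma~\ref{lem:close}(3), and this single pair witnesses $\pi\notclose(\rho\cup\sigma)$. With this paragraph inserted your verification is complete --- and in fact more complete than the paper's own proof, which also needs this axiom for the induced closure operator to define a topology.
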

\begin{proof}
We check the axioms of a proximity relation:
\begin{enumerate}
\item if $\pi_1\close \pi_2$ then $\pi_2\close\pi_1$ since the definition is symmetric in $\pi_1,\pi_2$.
\item If $\pi_1=\emptyset$ then $\pi_1\s \closedop\emptyset$. Now for every $\pi_2\s \corona X$ we have $\pi_2\s\closedop X$ and $\emptyset\notclose X$. Thus $\pi_1\notclose \pi_2$.
\item If $\pi_1\cap \pi_2\not=\emptyset$ and if $\pi_1\s \closedop A, \pi_2\s\closedop B$ then there is some coarse ultrafilter $\sheaff\in\pi_1\cap\pi_2$, thus $A\in\sheaff$ and $B\in\sheaff$. This implies $A\close B$. Thus we have shown that $\pi_1\close\pi_2$.
\item Let $\pi_1,\pi_2\s \corona X$ be subsets such that for every $\rho\s \corona X$ we have $\pi_1\close \rho$ or $\pi_2\close \rho^c$. Let $A,B\s X$ be subsets such that $\pi_1\s \closedop A,\pi_2\s \closedop B$. Let $C\s X$ be a subset. 
Now one of $\pi_1\close \closedop C$ or $\pi_2\close \closedop C^c$ holds. We have $\pi_1\s \closedop A,\closedop C\s \closedop C$ and $\pi_2\s\closedop B,\closedop C^c\s \closedop {C^c}$. Thus $A\close C$ or $B\close C^c$. Now $C\s X$ was arbitrary thus $A\close B$. We have shown $\pi_1\close \pi_2$.
\item Let $\sheaff,\sheafg\in \corona X$ be two coarse ultrafilters such that $\sheaff\bar\lambda\sheafg$. Then there exist subsets $A,B\s X$ such that $A\notclose B$ and $A\in\sheaff,B\in\sheafg$. Thus $\sheaff\notclose \sheafg$.
\item if $\sheaff,\sheafg\in \corona X$ are two coarse ultrafilters such that $\sheaff\lambda \sheafg$ then for every $A\in\sheaff,B\in\sheafg$ we have $A\close B$. This implies $\sheaff\close \sheafg$.
\end{enumerate}
\end{proof}

\begin{lem}
\label{lem:fE(f)}
If $f:X\to Y$ is a coarse map between metric spaces then $f$ induces a proximity map $\corona f :\corona X\to \corona Y$.
\end{lem}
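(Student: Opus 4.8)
The plan is to take $\corona f$ to be the map induced on $\lambda$-classes by the pushforward of coarse ultrafilters $f_*:\hat\nu(X)\to\hat\nu(Y)$ of Lemma~\ref{lem:cucoarsemap}, and then to verify the two things this requires: that the assignment descends to a well defined map $\corona X\to\corona Y$, and that it preserves the proximity relation $\close$. The first is immediate from the tools already available: a coarse map is close to itself, so Lemma~\ref{lem:closesamemap} applied with $g=f$ shows that $\sheaff\lambda\sheafg$ implies $f_*\sheaff\lambda f_*\sheafg$; hence $[\sheaff]\mapsto[f_*\sheaff]$ is well defined, and $\corona f$ is characterised by $\corona f\circ q_X=q_Y\circ f_*$ for the quotient maps $q_X:\hat\nu(X)\to\corona X$ and $q_Y:\hat\nu(Y)\to\corona Y$. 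Since both coronas carry the proximity induced from the one on $\hat\nu$, it then suffices to check that $f_*$ carries the relation $\close$ on subsets of $\hat\nu(X)$ to the relation $\close$ on subsets of $\hat\nu(Y)$ --- equivalently, by monotonicity of a proximity, that $\tau_1\notclose\tau_2$ in $\hat\nu(Y)$ implies $(f_*)^{-1}(\tau_1)\notclose(f_*)^{-1}(\tau_2)$ in $\hat\nu(X)$.

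The one genuine ingredient, which I would isolate as a preliminary claim, is that a coarse map $f$ pulls far sets back to far sets: if $A,B\s Y$ with $A\notclose B$, then $\iip f A\notclose\iip f B$ in $X$. To see this, fix an entourage $E\s X^2$ and put $F=\zzp f E\s Y^2$, which is an entourage of $Y$ because $f$ is coarsely uniform. A one-line check gives $E[\iip f A]\s\iip f{F[A]}$, and similarly for $B$, so
\[
E[\iip f A]\cap E[\iip f B]\s\iip f{F[A]\cap F[B]}.
\]
As $A\notclose B$, the set $F[A]\cap F[B]$ is bounded in $Y$; hence $\iip f{F[A]\cap F[B]}$ is bounded in $X$ because $f$ is coarsely proper, and therefore so is its subset $E[\iip f A]\cap E[\iip f B]$. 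Since $E$ was arbitrary this proves $\iip f A\notclose\iip f B$.

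With the claim in hand the rest is formal bookkeeping. Unwinding Definition~\ref{defn:topologyoncu}, $\tau_1\notclose\tau_2$ in $\hat\nu(Y)$ means there are subsets $A,B\s Y$ with $A\notclose B$, $\tau_1\s\closedop A$ and $\tau_2\s\closedop B$. From $A\in f_*\sheaff\iff\iip f A\in\sheaff$ one reads off $(f_*)^{-1}(\closedop A)=\closedop{\iip f A}$, so $(f_*)^{-1}(\tau_1)\s\closedop{\iip f A}$ and $(f_*)^{-1}(\tau_2)\s\closedop{\iip f B}$; since $\iip f A\notclose\iip f B$ by the claim, the defining condition of $\close$ on $\hat\nu(X)$ is met, i.e.\ $(f_*)^{-1}(\tau_1)\notclose(f_*)^{-1}(\tau_2)$. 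Transporting this along $\corona f\circ q_X=q_Y\circ f_*$ shows $\corona f$ is a proximity map (and hence, automatically, continuous). I expect the only real obstacle here to be organisational: keeping the three incarnations of $\close$ --- on subsets of $X$, on subsets of $\hat\nu(X)$, and on $\corona X$ --- carefully apart and tracking how $q_X$, $q_Y$ and $f_*$ interact; the preimage claim is the sole point with mathematical content, and it reduces to the two defining properties of a coarse map.
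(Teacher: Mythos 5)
Your proposal is correct and follows essentially the same route as the paper: define $\corona f$ by pushing forward coarse ultrafilters as in Lemma~\ref{lem:cucoarsemap} and verify the p-map condition by pulling the witnessing pair $A\notclose B$ back to $\iip f A\notclose \iip f B$. You additionally supply two details the paper leaves implicit --- well-definedness on $\lambda$-classes via Lemma~\ref{lem:closesamemap} with $g=f$, and the actual proof that coarse maps pull coarsely disjoint sets back to coarsely disjoint sets --- both of which are correct and worth having.
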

\begin{proof}
Define for a subset $S\s \corona X$:
\[
f_*S=\{[f_*\sheaff]:[\sheaff]\in S\}
\]
Let $\pi_1,\pi_2\s \corona X$ be two subsets with $f_*\pi_1\notclose f_*\pi_2$ in $\corona Y$. Then there are subsets $A,B\s Y$ such that $A\notclose B$ and $f_*\pi_1\s \closedop A,f_*\pi_2\s \closedop B$. Then $\iip f A\notclose \iip f B$ and $\pi_1\s \closedop{\iip f A},\pi_2\s\closedop{\iip f B}$. This implies $\pi_1\notclose \pi_2$.
\end{proof}

\begin{cor}
Denote by $\mcoarse$ the category of metric spaces and coarse maps modulo close. By $\proximity$ denote the category of proximity spaces and p-maps. Then
\[
\nu':\mcoarse\to \proximity
\]
is a functor.
\end{cor}

\begin{thm}
\label{thm:compact}
If $X$ is a metric space then the space $\corona X$ is compact.
\end{thm}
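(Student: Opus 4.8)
The plan is to imitate the classical proof that the space of clusters of a proximity space is compact (as for the Smirnov compactification in \cite{Naimpally1970}), using the ultrafilter criterion for compactness. The key point is that coarse ultrafilters are \emph{not} closed under finite intersections, so wherever the classical argument would intersect members of a cluster I will instead only use that a set of the form $\closedop C\cap\closedop D$ is nonempty in order to produce a witnessing coarse ultrafilter.

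First I would note that axioms (1)--(4) checked in the previous theorem already make $(\hat\nu(X),\close)$ a (possibly non-separated) proximity space, so it carries an induced topology, and I would prove that $\hat\nu(X)$ with this topology is compact. Let $\omega$ be an ultrafilter on the set $\hat\nu(X)$ and put
\[
\sheaff:=\{A\s X:\closedop A\in\omega\}.
\]
I claim $\sheaff$ is a coarse ultrafilter. Axiom 3 of Definition~\ref{defn:cu} holds since $\closedop X=\hat\nu(X)\in\omega$. For axiom 1, if $A,B\in\sheaff$ then $\closedop A,\closedop B\in\omega$, hence $\closedop A\cap\closedop B\in\omega$ is nonempty, and any coarse ultrafilter lying in it contains both $A$ and $B$, so $A\close B$. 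For axiom 2, $\closedop{A\cup B}\s\closedop A\cup\closedop B$ by axiom 2 of Definition~\ref{defn:cu}, so $\closedop{A\cup B}\in\omega$ forces $\closedop A\cup\closedop B\in\omega$, and as $\omega$ is an ultrafilter, $\closedop A\in\omega$ or $\closedop B\in\omega$; that is, $A\in\sheaff$ or $B\in\sheaff$.

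Then I would show $\omega\to\sheaff$. It suffices to take a closed set $E$ with $\sheaff\notin E$, so $\{\sheaff\}\notclose E$, and show $E\notin\omega$ (then $\hat\nu(X)\setminus E$, a typical open neighbourhood of $\sheaff$, lies in $\omega$, and superset-closedness of $\omega$ finishes). By Definition~\ref{defn:topologyoncu} there are $C,D\s X$ with $C\notclose D$, $C\in\sheaff$, and $E\s\closedop D$. If $E\in\omega$ then $\closedop D\in\omega$, while $C\in\sheaff$ gives $\closedop C\in\omega$; hence $\closedop C\cap\closedop D\neq\emptyset$ yields a coarse ultrafilter containing $C$ and $D$, contradicting $C\notclose D$. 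So $E\notin\omega$, every neighbourhood of $\sheaff$ lies in $\omega$, and the ultrafilter criterion gives compactness of $\hat\nu(X)$. Finally, the quotient map $\hat\nu(X)\to\corona X$ is the separation reflection of the proximity space $(\hat\nu(X),\close)$ --- indeed $\sheaff\lambda\sheafg$ iff $\{\sheaff\}\close\{\sheafg\}$ --- so it is a surjective p-map, in particular continuous, and $\corona X$ is compact as a continuous image of the compact space $\hat\nu(X)$.

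The step I expect to be most delicate is the last one: checking that the relation $\close$ and the sets $\closedop A$ descend compatibly to the quotient and that the separation map is continuous for the topology the theorem puts on $\corona X$, so that compactness really transfers. One can argue directly on $\corona X$ instead, but then care is needed because $\lambda$-equivalent coarse ultrafilters need not contain the same subsets of $X$, so a statement like ``$[\sheaff]\in\closedop A$'' must be read with care.
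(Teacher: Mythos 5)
Your proof of compactness of $\hat \nu (X)$ is correct, and it is a genuine variant of the paper's argument rather than a reproduction of it. The paper works with the finite intersection property directly on $\corona X$: it reduces to families of basic closed sets $(\closedop{B_i})_i$, passes to a maximal such family with the finite intersection property, and checks that $\sheaff=\{B_i:i\}$ is a coarse ultrafilter lying in $\bigcap_i\closedop{B_i}$. You instead let a set-theoretic ultrafilter $\omega$ on $\hat\nu(X)$ do the maximality work and read off the limit as $\sheaff=\{A\s X:\closedop A\in\omega\}$. Both proofs hinge on the same three facts ($\closedop X=\hat\nu(X)$; $\closedop A\cap\closedop B\neq\emptyset$ implies $A\close B$; and $\closedop{A\cup B}\s\closedop A\cup\closedop B$, which is axiom 2 of Definition~\ref{defn:cu}), so the substance is the same maximal-filter construction. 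What your route buys is that you never need to know that the $\closedop A$ form a base of closed sets (the paper only establishes this later, in Lemma~\ref{lem:coarsecovertopology}) and you avoid the slightly informal maximality argument; what it costs is the extra descent from $\hat\nu(X)$ to the quotient.

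That descent is the one genuine gap in your write-up, and you are right to flag it: the paper's proximity on $\corona X$ is phrased via the sets $\closedop A$ of \emph{classes admitting a representative containing $A$}, so it is not immediate that the quotient map $q:\hat\nu(X)\to\corona X$ is a p-map for that proximity. It is, but the verification needs Lemma~\ref{lem:close}(4). Suppose $q(\pi_1)\notclose q(\pi_2)$, witnessed by $A\notclose B$ with $q(\pi_1)\s\closedop A$, $q(\pi_2)\s\closedop B$ (on classes). Apply Lemma~\ref{lem:close}(4) to $A\notclose B$ to get disjoint $C,D$ with $A\notclose C^c$ and $B\notclose D^c$; since $D\s C^c$ this gives $A\notclose D$, and a second application to $A\notclose D$ gives disjoint $C_1,D_1$ with $A\notclose C_1^c$ and $D\notclose D_1^c$, whence $C_1\notclose D$ because $C_1\s D_1^c$. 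Now use the observation that if $[\sheafg]$ has a representative containing $B$ and $B\notclose S^c$, then every member of $\sheafg$ is close to $B$ by Definition~\ref{defn:asymptoticallyalikecu}, so $S^c\notin\sheafg$ and hence $S\in\sheafg$ by Lemma~\ref{lem:cubasicprops}. Applied with $S=D$ this puts the full preimage $q^{-1}(q(\pi_2))$ inside $\closedop D$ upstairs, and applied with $S=C_1$ (using $A\notclose C_1^c$) it puts $q^{-1}(q(\pi_1))$ inside $\closedop{C_1}$; since $C_1\notclose D$ we conclude $\pi_1\notclose\pi_2$ in $\hat\nu(X)$. So $q$ is proximally, hence topologically, continuous and $\corona X=q(\hat\nu(X))$ is compact. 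With this paragraph added your proof is complete.
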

\begin{proof}
Let $(A_i)_i$ be a family of arbitrary small closed sets in $\corona X$ with the finite intersection property. We need to show $\bigcap_i A_i\not=\emptyset$. It is sufficient to show this property for $A_i=\closedop{B_i}$ for every $i$ where $B_i\s X$ are subsets for every $i$. We can assume $(\closedop{B_i})_i$ is maximal among families of closed subsets of $\corona X$ of the form $(\closedop{B_i})_i$ with the finite intersection property. Define
\[
\sheaff:=\{B_i:i\}.
\]
Then $\sheaff$ is a coarse ultrafilter:
\begin{enumerate}
\item if $B_i,B_j\in\sheaff$ and $B_i\notclose B_j$ then $\closedop{B_i}\cap\closedop{B_j}=\emptyset$ which is a contradiction to the finite intersection property.
\item If $A,B\not\in\sheaff$ then there are $B_1,\ldots,B_n\in \sheaff$ with
\[
\closedop{B_1}\cap\cdots\cap\closedop{B_n}\cap\closedop A=\emptyset.
\]
And there are $C_1,\ldots,C_n\in\sheaff$ with
\[
\closedop{C_1}\cap\cdots\cap\closedop{C_m}\cap\closedop B=\emptyset.
\]
Assume the opposite, $A\cup B\in \sheaff$. Since $\closedop A\cup \closedop {B}=\closedop{A\cup B}$ we get
\f{
\closedop{A\cup B}\cap\closedop{B_1}&\cap\cdots\cap\closedop{B_n}\cap\closedop{C_1}\cap\cdots\cap\closedop{C_m}\\
&=\closedop{A\cup B}\cap( (\closedop{B_1}\cap\cdots\cap\closedop{B_n}\cap\closedop{C_1}\cap\cdots\cap\closedop{C_m}\cap\closedop A)\\
&\quad\cup (\closedop{B_1}\cap\cdots\cap\closedop{B_n}\cap\closedop{C_1}\cap\cdots\cap\closedop{C_m}\cap\closedop{B}))\\
&\s (\closedop{B_1}\cap\cdots\cap\closedop{B_n}\cap\closedop A)\cup (\cap\closedop{C_1}\cap\cdots\cap\closedop{C_m}\cap\closedop{B})\\
&=\emptyset.
}
This is a contradiction to the finite intersection property.
\item $X\in\sheaff$ by maximality.
\end{enumerate}
Then 
\[
\sheaff\in\bigcap_i\closedop{B_i}
\]
Thus the intersection of the $\closedop{B_i}$ is nonempty.
\end{proof}

\begin{prop}
\label{prop:higson}
If $X$ is a proper metric space then $\corona X$ is homeomorphic to the Higson corona $\nu(X)$ of $X$.
\end{prop}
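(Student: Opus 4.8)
The plan is to identify $\corona X$ with the boundary of the Smirnov compactification of the proximity space $(X,\delta_\lambda)$, and then to invoke the theorem of Kalantari and Honari \cite{Kalantari2016} (recalled in the introduction) that this boundary is homeomorphic to $\nu X$ for proper $X$. Recall that $A\,\delta_\lambda\,B$ means $A\close B$ or $A\cap B\neq\emptyset$, so $\delta_\lambda$ is exactly the relation obtained from the would-be proximity $\close$ by forcing every nonempty set to be near itself. First I would note that a proper metric space is coarsely proper, so the results of the earlier sections apply, and that, after replacing $X$ by a coarsely dense $R$-discrete subspace as in Theorem~\ref{thm:cuexists} (which changes neither $\corona X$, by functoriality of $\nu'$, nor $\nu X$), one may assume the bounded subsets of $X$ are precisely the finite ones.

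For the comparison on points, recall that the points of the Smirnov compactification $S(X)$ are the $\delta_\lambda$-clusters, that the embedded copy of $X$ consists of the principal clusters $\sigma_x=\{A:x\in A\}$ — here $\{x\}$ being bounded makes $\{x\}\,\delta_\lambda\,A$ equivalent to $x\in A$ — and that the boundary $S(X)\setminus X$ is the set of clusters with no bounded member. To a coarse ultrafilter $\sheaff$ I would attach the $\delta_\lambda$-cluster it generates,
\[
\widehat{\sheaff}=\{A\subseteq X:\ A\,\delta_\lambda\,B\ \mbox{ for all }B\in\sheaff\},
\]
and check, using Lemma~\ref{lem:close} and Lemma~\ref{lem:cubasicprops}, that $\widehat{\sheaff}$ is a cluster with no bounded member, that $\widehat{\sheaff}\supseteq\sheaff$, that every boundary cluster arises in this way, and that $\widehat{\sheaff}=\widehat{\sheafg}$ precisely when $\sheaff\lambda\sheafg$. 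Proposition~\ref{prop:alikecu} is what makes the last two points go through: it rewrites $\lambda$ in terms of the asymptotically-alike relation on individual subsets, which is exactly the slack introduced when one passes from $\close$ to $\delta_\lambda$ and then discards bounded sets. This yields a bijection between $\corona X=\hat\nu(X)/\lambda$ and $S(X)\setminus X$.

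To upgrade the bijection to a homeomorphism, note that both spaces are compact Hausdorff — $\corona X$ by Theorem~\ref{thm:compact} together with separatedness of its proximity, and $S(X)\setminus X$ as a closed subspace of the compact Hausdorff $S(X)$ — so it suffices to check continuity in one direction, and for that it is enough to match the two natural bases of closed sets: the sets $\closedop A$ of Definition~\ref{defn:topologyoncu} on the one side, and the traces of closures of subsets $A\subseteq X$ on the other. Under the bijection $\closedop A$ corresponds to the family of boundary clusters containing $A$, and the proximity $\pi_1\close\pi_2\iff\exists A,B\,(A\notclose B,\ \pi_1\subseteq\closedop A,\ \pi_2\subseteq\closedop B)$ corresponds to the restricted Smirnov proximity, using that on the boundary $\close$ and $\delta_\lambda$ agree (the extra clause $A\cap B\neq\emptyset$ can only be triggered by an intersection that is bounded once one is far enough out, and boundary clusters ignore bounded sets). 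Invoking \cite{Kalantari2016} then gives $S(X)\setminus X\cong\nu X$, whence $\corona X\cong\nu X$.

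I expect the point-comparison step to be the main obstacle. A coarse ultrafilter need not satisfy the closure axiom of a cluster — it may, for instance, contain a subset together with its complement whenever the two are not coarsely disjoint — so the correspondence with clusters is genuinely a bijection only after the $\lambda$-quotient, and proving that $\widehat{\cdot}$ is well defined, onto the boundary, and injective modulo $\lambda$ amounts to a polarity/Galois-closure argument that must handle $\close$, $\delta_\lambda$, the asymptotically-alike relation on sets, and the separation statement Lemma~\ref{lem:close}(4) simultaneously. A route that avoids the cluster formalism is to compare $\corona X$ directly with Protasov's corona $\check X=\beta X/{\sim}$ of \cite{Protasov2015}: on a coarsely dense $R$-discrete subspace one sends a free ultrafilter $p$ to the coarse ultrafilter it generates — every free ultrafilter already is a coarse ultrafilter, since any two of its members meet in an unbounded, hence $\close$, set — shows this descends to a homeomorphism $\check X\to\corona X$, and then quotes that $\check X\cong\nu X$ for proper $X$.
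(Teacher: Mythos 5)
Your primary route is genuinely different from the paper's. The paper does \emph{not} go through the Smirnov compactification of $(X,\delta_\lambda)$ at all: it identifies $\corona X$ with Protasov's corona $\check X$ and then quotes \cite[Proposition~1]{Protasov2005} for $\check X\cong\nu X$; concretely it reduces to an $R$-discrete space, uses Theorem~\ref{thm:cuexists} and \cite[Theorem~5.8]{Naimpally1970} to set up the bijection between free ultrafilters and coarse ultrafilters, Proposition~\ref{prop:alikecu} to match ``parallel'' with ``asymptotically alike'', and observes the topologies are defined identically. So the fallback you sketch in your last paragraph is essentially the paper's proof (your shortcut that a free ultrafilter is itself already a coarse ultrafilter is fine and lands in the same $\lambda$-class as the paper's $\{A:A\close C\ \forall C\in\sigma\}$). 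Your main route via $\delta_\lambda$-clusters is workable but carries the burden you yourself flag: verifying that $\widehat{\sheaff}$ satisfies the cluster axiom ``$A,B\in\widehat{\sheaff}\Rightarrow A\,\delta_\lambda\,B$'' is not a formality, because the intersection clause in $\delta_\lambda$ means Lemma~\ref{lem:close}(4) alone does not give the needed separation; you need the full strong axiom for $\delta_\lambda$ (i.e.\ that $(X,\delta_\lambda)$ is a proximity space, which is Kalantari's asymptotic normality result for metric spaces) combined with Lemma~\ref{lem:cubasicprops} ($E\in\sheaff$ or $E^c\in\sheaff$) to run the standard dichotomy argument. What each approach buys: yours makes the analogy with Smirnov theory explicit and explains \emph{why} the corona deserves the name boundary, at the cost of a more delicate cluster verification; the paper's route offloads all the work onto Protasov's and Naimpally--Warrack's results and is shorter, but less illuminating about the proximity structure. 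Neither is fully detailed, so I would not call your version a gap so much as an honest alternative whose hard step you have correctly located.
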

\begin{proof}
We prove that $\corona X$ is an equivalent definition to $\check X$, the corona of $X$ as defined in \cite[Chapter~2]{Protasov2005} if $X$ is a coarsely proper metric space. Then \cite[Proposition~1]{Protasov2005} implies that $\corona X$ and $\nu(X)$ are homeomorphic if $X$ is a proper metric space. 

Without loss of generality we can assume that $X$ is $R-$discrete for some $R>0$. Then the bounded sets are exactly the finite sets. In this case the non-principal ultrafilters are exactly the cobounded ultrafilters. By Theorem~\ref{thm:cuexists} we can associate to every non-principal ultrafilter a coarse ultrafilter and likewise every coarse ultrafilter on $X$ is induced by a non-principal ultrafilter on $X$ by \cite[Theorem~5.8]{Naimpally1970}. By Proposition~\ref{prop:alikecu} two non-principal ultrafilters are parallel if and only if their induced coarse ultrafilters are asymptotically alike. Finally the topology on $\corona X$ and $\check X$ has been defined in the same way.
\end{proof}

\begin{notat}
If $A,B\s X$ are two subsets of a metric space and $x_0\in X$ a point then define
 \f{
  \chi_{A,B}:\N&\to \R_+\\
  i&\mapsto d(A\ohne B(x_0,i),B\ohne B(x_0,i))
 }
 If $\chi\in \R_+^\N$ is a coarse map then the \emph{class $m(\chi)$ of $\chi$} is at least $f\in \R_+^\N$, written $m(\chi)\ge f$ if
\[
 \chi(i)\ge f(i) +c
\]
where $c\le 0$ is a constant.
Let $\sheaff,\sheafg$ be two coarse ultrafilters on $X$. The distance of $\sheaff$ to $\sheafg$ is at least $f\in\R_+^\N$, written $d(\sheaff,\sheafg)\ge f$ if there are $F\in\sheaff,G\in\sheafg$ with $m(\chi_{F,G})\ge f$.
\end{notat}

\begin{rem}
\label{rem:metrizable}
Note that by \cite[Proposition~8.1]{Dydak2018} there is no countable subset $S\s\R_+^\N$ with the property that for every $f\in\R_+^\N$ there is an element $s\in S$ with $m(f)\ge s$.
\end{rem}

\begin{prop}
\label{prop:countablebase}
 If $X$ is a metric space then the topology on $\corona X$ is coarser than the topology $\tau_d$ induced by $d$.
\end{prop}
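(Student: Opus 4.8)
The plan is to show that the identity map $(\corona X,\tau_d)\to\corona X$, the target carrying the corona topology, is continuous; equivalently, that every set $C$ closed in the corona topology is $\tau_d$-closed, i.e. that for every coarse ultrafilter $\sheaff$ whose class lies outside $C$ there is a $\tau_d$-neighbourhood of $\sheaff$ disjoint from $C$. (I argue with coarse ultrafilters directly; that $d$ and the statements involved descend to $\corona X=\hat\nu(X)/\lambda$ is routine via Proposition~\ref{prop:alikecu}.) Because the corona topology is induced by the proximity $\close$, $\sheaff$ lying outside the closed set $C$ means $\{\sheaff\}\notclose C$, so Definition~\ref{defn:topologyoncu} provides subsets $A,B\s X$ with $A\notclose B$, $A\in\sheaff$ and $B\in\sheafg$ for every $\sheafg$ representing a point of $C$. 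If $X$ is bounded there are no coarse ultrafilters at all (axioms $1$ and $3$ of Definition~\ref{defn:cu} clash with Lemma~\ref{lem:close}) and $\corona X=\emptyset$; otherwise every member of a coarse ultrafilter is unbounded (a bounded member $D$ would force $D\close D$, contradicting Lemma~\ref{lem:close}), so $A$ is unbounded, and so is $B$ unless $C=\emptyset$, which is trivial.

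The technical heart is the claim that \emph{for coarsely disjoint unbounded $A,B\s X$ the function $\chi_{A,B}$ of the preceding notation is non-decreasing with $\chi_{A,B}(i)\to\infty$.} Monotonicity holds because $A\ohne B(x_0,i)$ and $B\ohne B(x_0,i)$ shrink as $i$ increases and the distance between two sets cannot decrease when the sets shrink. For divergence I would suppose $\chi_{A,B}(i)\le r$ for all $i$; choosing $a_i\in A\ohne B(x_0,i)$ and $b_i\in B\ohne B(x_0,i)$ with $d(a_i,b_i)\le r+1$ gives $a_i\in E[A]\cap E[B]$ for $E=\{(x,y):d(x,y)\le r+1\}$ while $d(a_i,x_0)>i$, so $E[A]\cap E[B]$ is unbounded, contradicting $A\notclose B$.

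Granting this, set $f:=\chi_{A,B}$; it is unbounded. For every $\sheafg$ representing a point of $C$ we have $A\in\sheaff$, $B\in\sheafg$ and $m(\chi_{A,B})\ge f$ (with the constant $c=0$), hence $d(\sheaff,\sheafg)\ge f$; so the basic $\tau_d$-open set $U:=\{\sheafg:d(\sheaff,\sheafg)\not\ge f\}$ misses $C$. Finally $\sheaff\in U$: if $d(\sheaff,\sheaff)\ge f$ then $m(\chi_{F,G})\ge f$ for some $F,G\in\sheaff$; but $F\close G$ by axiom $1$ of Definition~\ref{defn:cu}, and points $x_k$ with $d(x_k,x_0)\to\infty$ in $E'[F]\cap E'[G]$ (for $E'=\{(x,y):d(x,y)\le r'\}$) yield $f_k\in F$, $g_k\in G$ with $d(f_k,g_k)\le 2r'$ escaping every ball, so $\chi_{F,G}\le 2r'$ is bounded, contradicting that $f$ is unbounded. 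Hence $U$ is a $\tau_d$-neighbourhood of $\sheaff$ disjoint from $C$; as $C$ was an arbitrary corona-closed set, the corona topology is coarser than $\tau_d$. (If $\tau_d$ is defined so that the ``radius'' $f$ must be a coarse map $\N\to\R_+$, I would instead take a $1$-Lipschitz unbounded $f\le\chi_{A,B}$, which exists exactly because $\chi_{A,B}$ is non-decreasing and divergent; the argument is otherwise unchanged.)

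I expect the claim about $\chi_{A,B}$ to be the main obstacle --- in particular verifying that its monotonicity and divergence persist after replacing $A,B$ by $\lambda$-equivalent representatives that the quotient may force on us; granting that, everything else is mechanical unwinding of the definitions of $m(\chi)$, of $d(\sheaff,\sheafg)\ge f$, and of the corona topology.
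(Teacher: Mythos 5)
Your proof is correct and takes essentially the same route as the paper's, which consists of a two-sentence assertion that the corona neighbourhood base $(\closedop A^c)_A$ at a point $\sheaff$ is refined by $\tau_d$-neighbourhoods; you supply the actual content behind that assertion, namely that $\chi_{A,B}$ is nondecreasing and unbounded when $A,B$ are unbounded and coarsely disjoint while $\chi_{F,G}$ stays bounded whenever $F\close G$. The caveats you flag (descent of $d(\sheaff,\sheafg)\ge f$ to the $\lambda$-quotient, and the coarse-map requirement on the radius $f$, handled by a Lipschitz minorant) are genuine but are equally left implicit in the paper, whose definition of $\tau_d$ is only sketched in the preceding notation.
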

\begin{proof}
 Every point $\sheaff\in\corona X$ has a base of neighborhoods $(\closedop A ^c)_A$ where the index $A$ runs over subsets of $X$ with $A\notclose B$ for some $B\in\sheaff$. The topology $\tau_d$ consists of neighborhoods of $\sheaff$ which are finer then the base of neighborhoods.  
 \end{proof}

\section{Coarse Cohomology with twisted Coefficients}

Coarse cohomology with twisted coefficients has been introduced in \cite{Hartmann2017a}. Coarse covers determine a Grothendieck topology on a coarse space and a coarse map gives rise to a morphism of Grothendieck topologies. The resulting sheaf cohomology on coarse spaces is called \emph{coarse cohomology with twisted coefficients}.

\begin{defn}
Let $A\s X$ be a subset of a metric space. Define $\closedop A\s \corona X$ as those classes of coarse ultrafilters for which there is one representative $\sheaff\in\closedop A$.
\end{defn}

\begin{lem}
\label{lem:coarsecovertopology}
If $X$ is a metric space
\begin{itemize}
\item then $(\closedop A^c)_{A\s X}$ are a base for the topology induced by $\close$ on $\corona X$.
\item If $(U_i)_i$ is a coarse cover of $X$ then $(\closedop {U_i^c}^c)_i$ is an open cover of $\corona X$.
\item For every open cover $(V_i)_i$ of $\corona X$ there is a coarse cover $(U_i)_i$ of $X$ such that $(\closedop {U_i^c}^c)_i$ refines $(V_i)_i$.
\item If $(U_i)_i$ is a coarse cover of $X$ then $\closedop {U_i^c}^c\cap\closedop{U_j^c}^c=\closedop{(U_i\cap U_j)^c}^c$ for every $ij$.
\end{itemize}
\end{lem}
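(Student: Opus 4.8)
The plan is to prove the four bullet points in sequence, with the third being the substantial one. For the first bullet, the topology induced by the proximity relation $\close$ on $\corona X$ has as closed sets those $S$ with $S \close \{\sheaff\} \Rightarrow \sheaff \in S$; equivalently a base of open neighborhoods of a point $\sheaff$ is given by sets $\closedop{A}^c$ where $A \notclose B$ for some $B \in \sheaff$. I would verify directly from Definition~\ref{defn:topologyoncu} that the $\closedop{A}^c$ are open: $\closedop{A}$ is closed because if $\pi \s \closedop{A}$ and $\sheaff \notin \closedop{A}$, i.e.\ $A \notin \sheaff$, then $A^c \in \sheaff$ by Lemma~\ref{lem:cubasicprops}, and one checks $\{\sheaff\} \notclose \closedop{A}$ using $A \notclose A^c$ modulo a boundedness argument — actually one uses Lemma~\ref{lem:close}(4) to separate. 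That these form a \emph{base} follows because given any neighborhood, the proximity definition produces a separating pair $A \notclose B$.

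For the second bullet, let $(U_i)_i$ be a coarse cover of $X$. By Lemma~\ref{lem:pcover} there is a finite set-cover $(V_i)_i$ of $X$ with $V_i \notclose U_i^c$ for every $i$. Then $\closedop{V_i} \s \closedop{U_i^c}^c$: if $\sheaff \in \closedop{V_i}$ then $V_i \in \sheaff$, and if also $U_i^c \in \sheaff$ then $V_i \close U_i^c$, a contradiction; so $U_i^c \notin \sheaff$, i.e.\ $\sheaff \in \closedop{U_i^c}^c$. Since $(V_i)_i$ covers $X$ as a set and every coarse ultrafilter contains $X = \bigcup_i V_i$, axiom 2 iterated gives some $V_i \in \sheaff$, so $(\closedop{V_i})_i$ covers $\corona X$, hence so does the larger family $(\closedop{U_i^c}^c)_i$.

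For the third bullet — the main obstacle — let $(W_j)_j$ be an open cover of $\corona X$. By the first bullet I may refine it to a cover by basic opens $\closedop{A_k}^c$, and by compactness (Theorem~\ref{thm:compact}) extract a finite subcover $\closedop{A_1}^c, \ldots, \closedop{A_n}^c$. So $\bigcap_k \closedop{A_k} = \emptyset$, i.e.\ no coarse ultrafilter contains all of $A_1, \ldots, A_n$. I want to conclude that $(A_k^c)_k$ is a coarse cover of $X$ — then setting $U_k := A_k^c$ gives $\closedop{U_k^c}^c = \closedop{A_k}^c$, refining the original cover. The key claim is therefore: \emph{if $B_1, \ldots, B_n \s X$ with $\bigcap_k \closedop{B_k} = \emptyset$ then $(B_k^c)_k$ coarsely covers $X$.} I would prove the contrapositive: if $(B_k^c)_k$ is not a coarse cover, then by Lemma~\ref{lem:pcover} no set-cover $(V_k)$ with $V_k \notclose B_k$ exists; equivalently, for every set-partition-like cover there is an index with $V_k \close B_k$. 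The clean route is to build, by a maximality/Zorn argument mirroring Theorem~\ref{thm:compact}, a coarse ultrafilter containing all $B_k$ — one shows that the failure of the coarse cover condition means $\bigcap_k \closedop{B_k}$ has the finite intersection property against the basic closed sets, and then Theorem~\ref{thm:compact}'s construction produces $\sheaff \in \bigcap_k \closedop{B_k}$. This is the delicate point: translating "not a coarse cover" into the finite-intersection hypothesis needed to run the compactness argument, which I expect to require Lemma~\ref{lem:pcover} together with the observation that $\closedop{\cdot}$ turns unions into unions and respects $\close$.

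For the fourth bullet, I compute directly: $\closedop{U_i^c}^c \cap \closedop{U_j^c}^c = (\closedop{U_i^c} \cup \closedop{U_j^c})^c = \closedop{U_i^c \cup U_j^c}^c = \closedop{(U_i \cap U_j)^c}^c$, using that $\closedop{A \cup B} = \closedop{A} \cup \closedop{B}$ — which holds for coarse ultrafilters by axiom 2 (if $A \cup B \in \sheaff$ then $A \in \sheaff$ or $B \in \sheaff$) and the trivial reverse inclusion — and De Morgan. No coarse-geometric input is needed here beyond the ultrafilter axioms.
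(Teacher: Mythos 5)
Your overall architecture matches the paper's: establish that the $\closedop A^c$ are basic opens, use compactness (Theorem~\ref{thm:compact}) to extract a finite subcover in the third bullet, and reduce the fourth bullet to $\closedop{A\cup B}=\closedop A\cup\closedop B$ plus De Morgan. Your second bullet is actually a cleaner route than the paper's, which argues by induction on the number of cover elements: your observation that every coarse ultrafilter contains some member of the set-cover $(V_i)_i$ supplied by Lemma~\ref{lem:pcover} (iterate axiom 2 starting from $X\in\sheaff$) and therefore cannot contain $U_i^c$ is direct and correct, modulo the small check that the conclusion passes to the whole $\lambda$-class (it does: if $\sheafg\lambda\sheaff$ and $U_i^c\in\sheafg$ then $V_i\close U_i^c$, a contradiction).

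The genuine gap is in the third bullet, exactly where you flagged it, and the repair you sketch does not work as stated. You need: if $\bigcap_k\closedop{B_k}=\emptyset$ then $(B_k^c)_k$ is a coarse cover, and you propose to prove the contrapositive by showing the family has the finite intersection property and then invoking the construction in Theorem~\ref{thm:compact}. But for a \emph{finite} family the finite intersection property is precisely the statement $\bigcap_k\closedop{B_k}\ne\emptyset$ that you are trying to prove, so this route is circular. The idea that actually closes the gap is the one from Theorem~\ref{thm:cuexists}: if $(B_k^c)_k$ is not a coarse cover there is an entourage $E$ such that $Z:=\bigcap_k E[B_k]$ is unbounded; take a non-principal ultrafilter $\sigma$ on (an $R$-discretization of) $Z$ and set $\sheaff:=\{A\s X: A\close C \text{ for each } C\in\sigma\}$. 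Every $C\in\sigma$ is an unbounded subset of $Z\s E[B_k]$ and hence close to $B_k$, so $B_k\in\sheaff$ for every $k$, contradicting $\bigcap_k\closedop{B_k}=\emptyset$. (Like the paper's own one-line assertion of the equivalence with ``$\bigcap_k E[B_k]$ bounded for every $E$'', this leans on coarse properness through the existence result.) One smaller slip: in the first bullet, $A\notclose A^c$ is false in general (take $A$ a half-line in $\R$), so that cannot be the mechanism for the closedness of $\closedop A$; the correct argument, as in the paper, is that $\sheaff\close\closedop A$ forces $B\close A$ for every $B\in\sheaff$, whence some representative of the class of $\sheaff$ contains $A$.
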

\begin{proof}
\begin{itemize}
\item  First we show that if $A\s X$ is a subset then $\closedop A$ is closed. Now the topology induced the proximity relation $\close$ on $\corona X$ is defined by the Kuratowski closure operator
\[
\pi\mapsto \{\sheaff:\sheaff\close \pi\}.
\]
Let $\sheaff\in \corona X$ be an element with $\sheaff\close \closedop A$. Then $B\in\sheaff$ implies $B\close A$. Thus $A\in\sheafg$ with $\sheafg\lambda \sheaff$. Thus $\sheaff\in\closedop A$.

We show $(\closedop S^c)_{S\s X}$ are a base for the topology of $\corona X$. If $A\s \corona X$ is closed and $\sheaff\in A^c$ then $\sheaff\notclose A$. Thus there are $S,T\s X$ with $S\in\sheaff, A\s \closedop T$ such that $S\notclose T$. But then $\sheaff\notclose \closedop T$ thus $\sheaff\in\closedop T^c\s A^c$.
\item We proceed by induction on the number of components of $(U_i)_i$.
\begin{enumerate}
\item If $U$ coarsely covers $X$ then $U^c$ is bounded. Thus $\closedop {U^c}=\emptyset$ which implies $\closedop {U^c}^c=\corona X$.
\item If $U_1^c\notclose U_2^c$ then $\closedop {U_1^c}\notclose\closedop {U_2^c}$ which implies $\closedop {U_1^c}\cap\closedop{U_2^c}=\emptyset$. Thus $\closedop {U_1^c}^c\cup\closedop {U_2^c}^c=\corona X$.
\item If $U,V,U_1,\ldots,U_n$ coarsely covers $X$ then $U\cup V,U_1,\ldots,U_n$ coarsely cover $X$ and $U,V$ coarsely cover $U\cup V$. By induction hypothesis $\closedop {(U\cup V)^c}^c,\closedop{U_1^c}^c,\ldots,\closedop{U_n^c}^c$ is an open cover of $\corona X$. Now $\closedop {(U\cup V)^c}^c\s \closedop{U\cup V}$ and $\closedop{U^c}^c,\closedop{V^c}^c$ are an open cover of $\closedop {U\cup V}$. As a result $\closedop{U^c}^c,\closedop{V^c}^c,\closedop{U_1^c}^c,\ldots,\closedop{U_n^c}^c$ are an open cover of $\corona X$.
\end{enumerate}
\item We can refine $(V_i)_i$ by $(\closedop {U_i^c}^c)_i$ for some $U_i\s X$. By Thereom~\ref{thm:compact} we can refine $(\closedop {U_i^c}^c)_i$ by a finite subcover $\closedop {U_1^c}^c,\ldots,\closedop {U_n^c}^c$. Now
\[
\closedop{U_1^c}\cap\ldots\cap \closedop{U_n^c}=\emptyset
\]
implies $\bigcap_i E[U_i^c]$ is bounded for every entourage $E\s X^2$. This is equivalent to $(U_i)_i$ being a coarse cover.
\item Since $U_i^c\cup U_j^c\z U_i^c,U_j^c$ the inclusion $\closedop{(U_i\cap U_j)^c}^c\s\closedop{U_i^c}^c\cap\closedop{U_j^c}^c$ is obvious. For the reverse inclusion note if $\sheaff$ is a coarse ultrafilter and $U_i^c,U_j^c\not\in\sheaff$ then $U_i^c\cup U_j^c\not\in\sheaff$.
\end{itemize}
\end{proof}

\begin{cor}
\label{cor:cor1}
The functor $\nu'$ preserves and reflects finite coproducts.
\end{cor}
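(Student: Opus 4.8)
The plan is to read both halves off the description of finite coproducts in the two categories together with Lemma~\ref{lem:coarsecovertopology}. In $\mcoarse$ a cocone $(\iota_j\colon X_j\to Z)_{j=1}^n$ is a coproduct cocone exactly when $Z$ is a disjoint union $Z=\bigsqcup_jU_j$ of pairwise coarsely disjoint subsets with each corestriction $X_j\to U_j$ a coarse equivalence; the nullary case is the empty space, and $\corona\emptyset=\emptyset$ since $\emptyset$ is bounded and hence, by Lemma~\ref{lem:close}, lies in no coarse ultrafilter. In $\topology$ a finite coproduct is a topological disjoint union, i.e.\ a partition into clopen pieces. An $n$-fold coproduct being an iterated binary one, it is enough to argue with finite coarse partitions, and I will do general $n$ at once. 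The translation I use repeatedly is that for $U\s Z$ the family $\{U,U^c\}$ is a coarse cover if and only if $U\notclose U^c$ (the $n=2$ case of the equivalence inside the proof of the third bullet of Lemma~\ref{lem:coarsecovertopology}), and more generally $\{U_1,\dots,U_n\}$ is a coarse cover iff the $U_j$ are pairwise coarsely disjoint.

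For \emph{preservation}, let $Z=\bigsqcup_jX_j$ with the $X_j$ pairwise coarsely disjoint. Every coarse ultrafilter on $Z$ contains $Z=\bigcup_jX_j$, hence by axiom~2 some $X_j$, and it cannot contain two of them (axiom~1 together with $X_j\notclose X_k$); therefore $\corona Z=\bigsqcup_j\closedop{X_j}$, with each $\closedop{X_j}$ closed by the first bullet of Lemma~\ref{lem:coarsecovertopology} and open as the complement of the union of the finitely many others, i.e.\ clopen. It remains to identify $\closedop{X_j}$ with $\corona X_j$: the inclusion $\iota_j\colon X_j\hookrightarrow Z$ is coarsely injective, so $\corona\iota_j$ is a closed embedding with image in $\closedop{X_j}$ by Lemma~\ref{lem:closedembedding}; and it is onto $\closedop{X_j}$, because for a coarse ultrafilter $\sheaff$ on $Z$ with $X_j\in\sheaff$ the restriction $\sheaff\cap\mathcal P(X_j)$ is a coarse ultrafilter on $X_j$ whose pushforward $(\iota_j)_*(\sheaff\cap\mathcal P(X_j))$ is $\lambda$-equivalent to $\sheaff$ (each of its members already lies in $\sheaff$). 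Hence $\corona\iota_j\colon\corona X_j\to\closedop{X_j}$ is a homeomorphism and $(\corona\iota_j)_j$ exhibits $\corona Z$ as $\bigsqcup_j\corona X_j$; that is, $\nu'$ preserves finite coproducts.

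For \emph{reflection}, let $(f_j\colon X_j\to Z)_{j=1}^n$ be a cocone whose image $(\corona f_j)_j$ is a coproduct cocone in $\topology$, so $\corona Z=\bigsqcup_jW_j$ with $W_j:=\im(\corona f_j)$ clopen and $\corona f_j\colon\corona X_j\to W_j$ a homeomorphism. Each $W_j$ is open, hence a union of basic open sets $\closedop S^c$ by the first bullet of Lemma~\ref{lem:coarsecovertopology}, and being closed in the compact space $\corona Z$ (Theorem~\ref{thm:compact}) it is a \emph{finite} such union, $W_j=\bigcup_k\closedop{S_{j,k}}^c$. Then $\bigcap_{j,k}\closedop{S_{j,k}}=\emptyset$, which by the argument in the proof of the third bullet of Lemma~\ref{lem:coarsecovertopology} makes $\{S_{j,k}^c\}_{j,k}$ a coarse cover of $Z$; applying Lemma~\ref{lem:pcover} and shrinking to a pairwise disjoint family yields a cover $Z=\bigsqcup_{j,k}V_{j,k}$ with $V_{j,k}\notclose S_{j,k}$. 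Put $U_j:=\bigcup_kV_{j,k}$, so $Z=\bigsqcup_jU_j$. From $V_{j,k}\notclose S_{j,k}$ we get $\closedop{V_{j,k}}\s\closedop{S_{j,k}}^c\s W_j$, and since $U_j\in\sheaff$ forces some $V_{j,k}\in\sheaff$ (axiom~2 iterated) we obtain $\closedop{U_j}\s W_j$; as every coarse ultrafilter contains some $U_j$, the $\closedop{U_j}$ cover $\corona Z=\bigsqcup_jW_j$, and $\closedop{U_j}\s W_j$ then forces $\closedop{U_j}=W_j$. Likewise $\closedop{U_j^c}\s W_j^c$, so $\closedop{U_j}\cap\closedop{U_j^c}\s W_j\cap W_j^c=\emptyset$; by the $n=2$ equivalence above $U_j\notclose U_j^c$, hence (Lemma~\ref{lem:close}) the $U_j$ are pairwise coarsely disjoint. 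Thus $(U_j\hookrightarrow Z)_j$ is a coproduct cocone in $\mcoarse$ with $\corona U_j\cong W_j\cong\corona X_j$. Finally the comparison map $g\colon\coprod_jX_j\to Z$ has $\corona g$ equal, via the preservation part and functoriality of $\nu'$, to the induced map $\bigsqcup_j\corona X_j\to\corona Z$, which is a homeomorphism; so $g$ is a coarse equivalence by conservativity of $\nu'$, and therefore $(f_j)_j$ is a coproduct cocone.

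The preservation half is a direct unwinding of Lemma~\ref{lem:coarsecovertopology}. In the reflection half, constructing the coarse partition $Z=\bigsqcup_jU_j$ goes through cleanly: although $A\mapsto\closedop A$ is not a Boolean homomorphism, the complement-level identities of Lemma~\ref{lem:coarsecovertopology} are exactly what the argument calls for. The step I expect to be the main obstacle is the very last one — deducing that $g$ is a coarse equivalence from $\corona g$ being a homeomorphism, i.e.\ conservativity of $\nu'$. Coarse surjectivity of $g$ comes from Theorem~\ref{thm:surjectivecoarselysurjective}, but coarse injectivity must be extracted from injectivity of $\corona g$ — a converse to Lemma~\ref{lem:closedembedding} — and arguing this contrapositively seems to need Theorem~\ref{thm:cuexists}, hence a coarse properness hypothesis; discharging that dependence (or bypassing conservativity altogether) is the delicate point.
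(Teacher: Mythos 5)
Your preservation argument is correct and is essentially a careful expansion of what the paper only sketches: the paper's proof consists of a citation to Weighill for the proper case plus the two sentences ``a coarse disjoint union $(U_i)_i$ of $X$ is mapped to a union of topological connection components'' and ``if $\closedop A,\closedop B$ are disjoint then $A,B$ are coarsely disjoint.'' Your version supplies the details the paper omits (every coarse ultrafilter contains exactly one $X_j$; $\closedop{X_j}$ is clopen; $\corona\iota_j$ is a homeomorphism onto $\closedop{X_j}$ via the restriction $\sheaff\cap\mathcal P(X_j)$, exactly as in the proof of Lemma~\ref{lem:closedembedding}), and it is sound. Your reflection argument, up to the construction of the coarse partition $Z=\bigsqcup_j U_j$ with $\closedop{U_j}=W_j$ and $U_j\notclose U_j^c$, is also fine granted the paper's own (unproved) claim inside the third bullet of Lemma~\ref{lem:coarsecovertopology} that $\closedop{A}\cap\closedop{B}=\emptyset$ forces $A\notclose B$ --- which is in fact the entire content of the paper's reflection ``proof.''

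The gap you flag at the end is real, and it is a gap in the corollary as stated, not merely in your write-up. To conclude that the comparison map $g\colon\coprod_j X_j\to Z$ is a coarse equivalence you need $\nu'$ to reflect isomorphisms, and the paper gives you no such tool: Theorem~\ref{thm:surjectivecoarselysurjective} yields coarse surjectivity only under a coarse properness hypothesis, and there is no converse to Lemma~\ref{lem:closedembedding} anywhere in the paper (Lemma~\ref{lem:notclosecoarselyinjective} goes from a $\close$-preservation property to coarse injectivity, but extracting that property from injectivity of $\corona g$ again needs existence of coarse ultrafilters containing prescribed close pairs, i.e.\ Theorem~\ref{thm:cuexists}, which also assumes coarse properness). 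Indeed, without coarse properness it is not even clear that $\corona X\ne\emptyset$ for unbounded $X$, so conservativity of $\nu'$ on all of $\mcoarse$ is doubtful. The honest conclusions are: (i) preservation holds as you prove it; (ii) the combinatorial half of reflection --- disjointness of the clopen pieces $\closedop{U_j}$ downstairs forces coarse disjointness of the $U_j$ upstairs --- holds and is all the paper actually argues; (iii) full categorical reflection requires either a coarse properness hypothesis or a conservativity statement the paper does not supply. You have correctly located the weak point.
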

\begin{proof}
In case $X$ is a proper metric space there is a homeomorphism $\corona X=\nu(X)$. Then \cite[Proposition~4.5, Theorem~4.6]{Weighill2016} already states this result. Now Lemma~\ref{lem:coarsecovertopology} serves an alternative proof: A coarse disjoint union\footnote{A coarse disjoint union $(U_i)_i$ is a coarse cover with every two elements $U_i,U_j$ coarsely disjoint.} $(U_i)_i$ of $X$ is mapped to a union of topological connection components $(\corona {U_i})_i$. If for two subsets $A,B\s X$ the closed sets $\closedop A\notclose \closedop B$ are disjoint then $A,B$ are coarsely disjoint.
\end{proof}

\begin{thm}
\label{thm:sheaf}
Let $X$ be a metric space.
\begin{itemize}
\item If $\sheaff$ is a sheaf on $X$ then
\[
\sheaff(\closedop{A^c}^c)=\sheaff(A)
\]
for every $A\s X$ with restriction maps
\[
\sheaff(\closedop{B^c}^c)\to \sheaff(\closedop{A})
\]
equal to $\sheaff(B)\to\sheaff(A)$ for every $A\s B$ determines a sheaf on $\corona X$.
\item If $\sheaff$ is a sheaf on $\corona X$ then 
\[
\sheaff(A)=\sheaff(\closedop{A^c}^c)
\]
for every $A$ with restriction maps
\[
\sheaff(B)\to \sheaff(A)
\]
equal to the restriction map $\sheaff(\closedop{B^c}^c)\to \sheaff(\closedop{A^c}^c)$ for every $A\s B$ is a sheaf on $X$ as a coarse space.
\end{itemize}
\end{thm}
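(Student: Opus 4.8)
The plan is to organize everything around the single map $j\colon A\mapsto \closedop{A^c}^c$. By Lemma~\ref{lem:coarsecovertopology} this map behaves, in effect, like a morphism of sites from $X$ with the Grothendieck topology of coarse covers to $\corona X$ with its topology: its image is a basis of $\corona X$; a coarse cover $(U_i)_i$ of $X$ is carried to an open cover $(\closedop{U_i^c}^c)_i$ of $\corona X$ and, conversely, every open cover of $\corona X$ is refined by one of this shape; and $\closedop{U_i^c}^c\cap\closedop{U_j^c}^c=\closedop{(U_i\cap U_j)^c}^c$ on the members of a coarse cover. Theorem~\ref{thm:sheaf} is the object-level shadow of the equivalence of sheaf categories that these facts produce, and I would prove it by constructing the two passages explicitly and checking that they are mutually inverse. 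A preliminary step is to relativise Lemma~\ref{lem:coarsecovertopology} to subsets: for a coarse cover $(U_i)_i$ of $A\s X$ the family $(\closedop{(U_i\cap A)^c}^c)_i$ is an open cover of $\closedop{A^c}^c$ with pairwise intersections $\closedop{(U_i\cap U_j\cap A)^c}^c$, and every cover of $\closedop{A^c}^c$ by basic opens is refined by one of this kind; the proofs are those of Lemma~\ref{lem:coarsecovertopology}, iterating axiom~2 of Definition~\ref{defn:cu} and, for the refinement statement, using compactness (Theorem~\ref{thm:compact}).

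\emph{From a sheaf $\sheaff$ on the coarse space $X$ to a sheaf $\Phi(\sheaff)$ on $\corona X$.} On basic opens I would put $\Phi(\sheaff)(\closedop{A^c}^c):=\sheaff(A)$, with restriction maps inherited from $\sheaff$, and then extend to all open sets of $\corona X$ by the standard construction of a sheaf from a sheaf defined on a basis. The first and most delicate point is well-definedness: one must show that $\closedop{A^c}^c=\closedop{B^c}^c$ already forces $\sheaff(A)\cong\sheaff(B)$ canonically and compatibly with restrictions, for every sheaf $\sheaff$ on the coarse site. This is the assertion that a coarse sheaf cannot distinguish $A$ from $B$ once $j$ fails to; I would prove it by extracting from $\closedop{A^c}^c=\closedop{B^c}^c$ a chain of coarse covers linking $A$ and $B$ and then invoking the sheaf axiom along it, and this is exactly where the analysis of coarse ultrafilters and the close relation carried out in Section~4 is indispensable. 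Granting well-definedness, the sheaf axiom for $\Phi(\sheaff)$ on the basis is checked as follows: for a basic open $V=\closedop{A^c}^c$ and a cover of $V$ by basic opens, intersecting with $V$ one may assume the members are $\closedop{(B_i\cap A)^c}^c$ with $B_i\s A$; then, since $\corona X\setminus V=\closedop{A^c}$ is closed in the compact space $\corona X$, the relativised Lemma~\ref{lem:coarsecovertopology} lets one reduce the identity and gluing axioms for $\Phi(\sheaff)$ over $V$, by way of the intersection formula $\closedop{W_j^c}^c\cap\closedop{W_k^c}^c=\closedop{(W_j\cap W_k)^c}^c$, to the identity and gluing axioms for $\sheaff$ over a coarse cover $(W_j)_j$ of $A$, which hold because $\sheaff$ is a sheaf.

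\emph{From a sheaf $\sheaff$ on $\corona X$ to a sheaf $\Psi(\sheaff)$ on the coarse space $X$.} Here I would simply set $\Psi(\sheaff)(A):=\sheaff(\closedop{A^c}^c)$, with restriction maps those of $\sheaff$ along $\closedop{A^c}^c\s\closedop{B^c}^c$; this is visibly a presheaf on the coarse site, and well-definedness is not an issue in this direction. If $(U_i)_i$ is a coarse cover of $A\s X$, then by the relativised Lemma~\ref{lem:coarsecovertopology} the family $(\closedop{(U_i\cap A)^c}^c)_i$ is an open cover of $\closedop{A^c}^c$ with overlaps $\closedop{(U_i\cap U_j\cap A)^c}^c$, so the sheaf axiom for $\sheaff$ on this open cover translates verbatim into the sheaf axiom for $\Psi(\sheaff)$ on $(U_i)_i$; hence $\Psi(\sheaff)$ is a sheaf on $X$. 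Finally, the two passages are mutually inverse: $\Psi(\Phi(\sheaff))(A)=\Phi(\sheaff)(\closedop{A^c}^c)=\sheaff(A)$ for every $A$, while $\Phi(\Psi(\sheaff))$ agrees with $\sheaff$ on the basis $(\closedop{A^c}^c)_A$ of $\corona X$, hence on all open sets; compatibility with the restriction maps is immediate, and passing to derived functors of global sections one also obtains the cohomology statement announced in the introduction.

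The step I expect to be the main obstacle is the interaction between the two reductions hidden in the second paragraph. One is the well-definedness of $\Phi$ — that $\closedop{A^c}^c=\closedop{B^c}^c$ forces $\sheaff(A)\cong\sheaff(B)$, together with the companion extension of the intersection formula $\closedop{(A\cap B)^c}^c=\closedop{A^c}^c\cap\closedop{B^c}^c$ from the members of a coarse cover to all the pairs of subsets that occur — both of which rest on the monotonicity and finite-additivity of coarse ultrafilters developed in Section~4 and used tacitly in the proof of Lemma~\ref{lem:coarsecovertopology}. The other is the mismatch between finite and arbitrary covers: coarse covers are finite by definition, whereas a basic-open cover of $\closedop{A^c}^c$ need not be, and because $\corona X$ is compact Hausdorff the open set $\closedop{A^c}^c$ is not quasi-compact unless it is clopen; so the reduction of such a cover to one controlled by a coarse cover of $A$ must be routed through the compact closed complement $\closedop{A^c}$ and the relativised Lemma~\ref{lem:coarsecovertopology}, and this is the point at which the proof will need the most care.
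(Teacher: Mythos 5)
Your plan follows essentially the same route as the paper: treat $(\closedop{A^c}^c)_{A\s X}$ as a base of $\corona X$ via Lemma~\ref{lem:coarsecovertopology}, invoke the sheaf-on-a-base machinery of \cite[Chapter~2.7]{Vakil2015}, and verify the base identity and gluability axioms by translating coarse covers of $U\s X$ into open covers of $\closedop{U^c}^c$ and back, the converse direction being formally the same translation. The two obstacles you isolate --- well-definedness of the assignment when $\closedop{A^c}^c=\closedop{B^c}^c$ for $A\neq B$, and the fact that a basic open need not be quasi-compact, so an arbitrary cover of it by basic opens cannot simply be refined by one induced by a (finite) coarse cover --- are genuine, but the paper's proof is silent on both: it checks the base axioms only for covers arising from coarse covers of $U$ and dismisses the second bullet as ``easy''.
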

\begin{proof}
\begin{itemize}
\item Now the $\closedop{A^c}^c$ with $A\s X$ are a base for the topology on $\corona X$. By \cite[Chapter~2.7]{Vakil2015} we only need to check the base identity axiom and the base gluability axiom.
\begin{enumerate}
\item base identity axiom: If $(U_i)_i$ is a coarse cover of $U\s X$ and if $x,y\in\sheaff(\closedop{U^c}^c)$ are such that $x|_{\closedop{U_i^c}^c}=y|_{\closedop{U_i^c}^c}$ for every $i$ then for the same $x,y\in\sheaff(U)$ we have $x|_{U_i}=y|_{U_i}$ for every $i$. Thus $x=y$ on $U$. This implies $x=y$ on $\closedop {U^c}^c$.
\item base gluability axiom: If $(U_i)_i$ is a coarse cover of $U\s X$ and we have $x_i\in\sheaff(\closedop{U_i^c}^c)$ such that $x_i|_{\closedop{U_j^c}^c\cap\closedop{U_i^c}^c}=x_j|_{\closedop{U_i^c}^c\cap\closedop{U_j^c}^c}$ for every $i,j$ then the same $x_i\in \sheaff(U_i)$ glue as $x_i|_{U_j}=x_j|_{U_i}$ for every $ij$. This implies there is a global section $x\in \sheaff(U)$ which restricts to $x_i\in\sheaff(U_i)$ for every $i$. Thus the same $x\in\sheaff(\closedop{U^c}^c)$ restricts to $x_i\in\sheaff(\closedop{U_i^c}^c)$ for every $i$.
\end{enumerate}
\item easy.
\end{itemize}
\end{proof}

\begin{cor}
\label{cor:cor2}
Let $X$ be a metric space. If $\sheaff$ is a sheaf on $X$ then
\[
\cohomology i X \sheaff = \cohomology i {\corona X} \sheaff
\]
here the left side denotes sheaf cohomology on coarse spaces and the right side denotes sheaf cohomology on a topological space. Likewise if $\sheaff$ is a sheaf on $\corona X$ the same statement holds. 
\end{cor}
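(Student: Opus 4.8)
The plan is to deduce the cohomology isomorphism from the sheaf correspondence of Theorem~\ref{thm:sheaf} together with the comparison of covers in Lemma~\ref{lem:coarsecovertopology}, via \v{C}ech cohomology. First I would recall that sheaf cohomology on the coarse space $X$ is, by its definition in \cite{Hartmann2017a}, the derived functor cohomology of the Grothendieck topology generated by coarse covers, and that on a paracompact-like site it agrees with \v{C}ech cohomology computed over coarse covers; similarly, sheaf cohomology on the compact space $\corona X$ can be computed as \v{C}ech cohomology over open covers. So the strategy is: given a sheaf $\sheaff$ on $X$, transport it to a sheaf $\sheaff'$ on $\corona X$ by Theorem~\ref{thm:sheaf}, and show the two \v{C}ech complexes are cofinally isomorphic.

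The key steps, in order: (1) By Lemma~\ref{lem:coarsecovertopology}, the assignment $U \mapsto \closedop{U^c}^c$ sends a coarse cover $(U_i)_i$ of $X$ to an open cover $(\closedop{U_i^c}^c)_i$ of $\corona X$, and conversely every open cover of $\corona X$ is refined by one of this form; moreover this assignment respects finite intersections, since $\closedop{U_i^c}^c \cap \closedop{U_j^c}^c = \closedop{(U_i\cap U_j)^c}^c$. Hence the coarse covers of $X$ are cofinal, as an index system for \v{C}ech cohomology, in the open covers of $\corona X$ under this correspondence. (2) For a fixed coarse cover $\ucover = (U_i)_i$ and its image $\ucover' = (\closedop{U_i^c}^c)_i$, Theorem~\ref{thm:sheaf} gives $\sheaff(\closedop{U_{i_0}^c}^c \cap \cdots \cap \closedop{U_{i_p}^c}^c) = \sheaff((U_{i_0}\cap\cdots\cap U_{i_p})^c{}^c \text{-image}) = \sheaff(U_{i_0}\cap\cdots\cap U_{i_p})$, compatibly with restriction maps; so the \v{C}ech complex $\check C^\bullet(\ucover',\sheaff')$ is literally identical to $\check C^\bullet(\ucover,\sheaff)$. (3) Passing to the colimit over the cofinal systems of covers on both sides yields $\check H^i(X,\sheaff) \cong \check H^i(\corona X,\sheaff')$. (4) Invoke the comparison theorem identifying \v{C}ech and derived-functor sheaf cohomology on each side — on $\corona X$ this is standard since $\corona X$ is compact Hausdorff (hence paracompact), and on the coarse site $X$ this is the content of the cohomology theory set up in \cite{Hartmann2017a}. (5) The reverse direction, starting from a sheaf on $\corona X$, is symmetric using the second bullet of Theorem~\ref{thm:sheaf}.

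The main obstacle I expect is step (4) on the coarse side: justifying that coarse cohomology with twisted coefficients, as a derived functor, is computed by \v{C}ech cohomology over coarse covers, and that coarse covers form a cofinal system for this purpose. This is essentially a Leray-type / Cartan's-criterion argument on the coarse site, and whether it holds cleanly depends on acyclicity properties established in \cite{Hartmann2017a}; one wants that a cofinal family of coarse covers consists of covers on which higher cohomology of (restrictions of) the sheaf vanishes, or at least that the \v{C}ech-to-derived spectral sequence degenerates in the colimit. If such a statement is already available in \cite{Hartmann2017a}, the proof collapses to steps (1)--(3) plus citation; if not, one would instead argue directly that the morphism of sites $X \to \corona X$ induced by $U \mapsto \closedop{U^c}^c$ is an equivalence of sites (the functor is fully faithful, cover-preserving, and cover-reflecting by Lemma~\ref{lem:coarsecovertopology}, and by Theorem~\ref{thm:sheaf} it induces an equivalence of the categories of sheaves), whence the topos cohomologies agree by functoriality of derived pushforward. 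I would in fact lead with this second, cleaner formulation: Theorem~\ref{thm:sheaf} exhibits an equivalence between the category of sheaves on the coarse site of $X$ and the category of sheaves on $\corona X$, and an equivalence of categories of sheaves induces isomorphisms on all derived functors of the global sections functor, since global sections corresponds on both sides to evaluation at the terminal object ($X$, respectively $\corona X = \closedop{\emptyset^c}^c$) and equivalences preserve injective resolutions.
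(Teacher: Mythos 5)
Your proposal is correct in substance, and its final formulation is exactly what the paper intends: the paper in fact gives no written proof of this corollary, treating it as an immediate consequence of Theorem~\ref{thm:sheaf}, namely that the mutually inverse constructions $\sheaff(A)\leftrightarrow\sheaff(\closedop{A^c}^c)$ give an equivalence of the categories of sheaves on the coarse site of $X$ and on $\corona X$ which identifies the global sections functors (since $\closedop{(X^c)}^c=\corona X$), and an exact equivalence preserving injectives identifies all derived functors of global sections. The bulk of your writeup, however, takes a longer \v{C}ech-theoretic detour, and there the weak point is the one you yourself flag in step (4): the paper nowhere establishes that coarse cohomology with twisted coefficients agrees with \v{C}ech cohomology over coarse covers, so that route would require an additional Cartan/Leray-type acyclicity input not available in the cited references. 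Since you explicitly say you would lead with the sheaf-category-equivalence argument, your proof is fine; I would only add the small check that the two assignments of Theorem~\ref{thm:sheaf} are functorial in $\sheaff$ and mutually inverse up to natural isomorphism (immediate from the fact that both are the identity on sections over the basic opens $\closedop{A^c}^c$, which form a base by Lemma~\ref{lem:coarsecovertopology}), since the theorem as stated only produces a sheaf in each direction rather than asserting the equivalence outright.
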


\begin{cor}
\label{cor:cor3}
If two coarse maps between metric spaces $f,g:X\to Y$ are close then they induce isomorphic maps in coarse cohomology with twisted coefficients. 
\end{cor}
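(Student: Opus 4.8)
The plan is to deduce this from the fact that coarse cohomology with twisted coefficients factors through the corona functor, where the difference between close maps disappears. First I would observe that, since $f$ and $g$ are close, Lemma~\ref{lem:closesamemap} applied with $\sheaff=\sheafg$ gives $f_\ast\sheaff\lambda g_\ast\sheaff$ for every coarse ultrafilter $\sheaff$ on $X$; hence $\corona f([\sheaff])=\corona g([\sheaff])$ for all $[\sheaff]$, so the continuous maps $\corona f$ and $\corona g$ induced by Lemma~\ref{lem:fE(f)} coincide. Write $h\colon\corona X\to\corona Y$ for this common map.

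Next I would use the dictionary between the coarse site of a metric space and the topology of its corona. By Lemma~\ref{lem:coarsecovertopology} the assignment $U\mapsto\closedop{U^c}^c$ carries the coarse covers of $X$ to a base of open covers of $\corona X$, compatibly with the finite intersections occurring in covers; by Theorem~\ref{thm:sheaf} this identifies sheaves on the coarse space $X$ with sheaves on the topological space $\corona X$, and by Corollary~\ref{cor:cor2} the identification preserves sheaf cohomology in every degree. The point I would then establish is that this identification is natural in the space: a coarse map $f\colon X\to Y$ induces the morphism of sites $V\mapsto\iip f V$ on the coarse side, while $h$ induces $W\mapsto\iip h W$ on the topological side, and — unwinding the definition of $\corona f$ through $\sheaff\mapsto f_\ast\sheaff$, together with $\iip f{(V^c)}=(\iip f V)^c$ and Lemma~\ref{lem:closesamemap} — these two morphisms of sites match up under $U\mapsto\closedop{U^c}^c$. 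Consequently, for a sheaf $\sheaff$ on $Y$, the coarse pullback $f^{\ast}\sheaff$ corresponds under the dictionary to the topological pullback $h^{\ast}\sheaff$, and the induced map $\cohomology i Y\sheaff\to\cohomology i X{f^{\ast}\sheaff}$ corresponds to $\cohomology i{\corona Y}\sheaff\to\cohomology i{\corona X}{h^{\ast}\sheaff}$.

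Finally I would apply this to $f$ and to $g$ at once: since both induce the same continuous map $h$, the associated topological pullback maps on $\cohomology i{\corona Y}\sheaff$ are literally identical, so transporting back along the dictionary yields a canonical isomorphism $f^{\ast}\sheaff\cong g^{\ast}\sheaff$ under which $f$ and $g$ induce the same map on $\cohomology i Y\sheaff$; that is, they induce isomorphic maps in coarse cohomology with twisted coefficients. The main obstacle is the naturality claim in the previous paragraph: one must genuinely verify that $\corona f$, which is built from the pushforward $f_\ast$ on coarse ultrafilters, reproduces on the topology of the corona the morphism of sites that $f$ defines on the coarse site via preimages. Everything else is formal once the functoriality of $\nu'$ and Corollary~\ref{cor:cor2} are in hand.
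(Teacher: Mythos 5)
Your proposal is correct and follows essentially the same route as the paper's own argument: show $\corona f=\corona g$ via Lemma~\ref{lem:closesamemap} and Lemma~\ref{lem:fE(f)}, then transport the cohomological statement through the sheaf dictionary of Theorem~\ref{thm:sheaf} and Corollary~\ref{cor:cor2}, using that the morphism of sites induced by $f$ on the coarse side matches the one induced by $\corona f$ on the corona. The naturality step you flag as the main obstacle is exactly the point the paper also leaves at the level of an assertion (``the direct image functors are basically the same maps on sheaf level''), so your write-up is, if anything, slightly more explicit about what needs checking.
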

\begin{proof}
This has already been proved in \cite[Theorem~72]{Hartmann2017a}. Now Theorem~\ref{thm:sheaf} serves another proof: A coarse map $f:X\to Y$ between coarse spaces gives rise to a morphism of Grothendieck topologies $Y_{ct},X_{ct}$ in the same way as $\corona f$ gives rise to a morphism of Grothendieck topologies of the topological spaces $\corona Y,\corona X$. Thus the direct image functors $f_*$ and $\corona f_*$ are basically the same maps on sheaf level. If $f,g$ are close coarse maps then $\corona f=\corona g$ by Lemma~\ref{lem:closesamemap}, Lemma~\ref{lem:fE(f)}. Thus the result follows.
\end{proof}

\begin{cor}
\label{cor:cor4}
Let $X$ be a metric space. If $\sheaff$ is a sheaf on $X$ and $A,B\s X$ are two subsets that coarsely cover $X$ there is a Mayer-Vietoris long exact sequence in cohomology:
 \f{
 \cdots&\to \cohomology {i-1} {A\cap B}\sheaff\to \cohomology {i} {A\cup B}\sheaff\to\cohomology i A\sheaff\times\cohomology i B \sheaff\\
 &\to \cohomology i {A\cap B}\sheaff\to\cdots
 }
\end{cor}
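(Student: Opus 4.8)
The plan is to deduce the Mayer--Vietoris sequence from the topological Mayer--Vietoris sequence on $\corona X$ together with the identifications furnished by Lemma~\ref{lem:coarsecovertopology} and Theorem~\ref{thm:sheaf}. First I would translate the hypothesis: if $A,B\s X$ coarsely cover $X$, then by the second bullet of Lemma~\ref{lem:coarsecovertopology} the sets $U:=\closedop{A^c}^c$ and $V:=\closedop{B^c}^c$ form an open cover of $\corona X$, and by the fourth bullet $U\cap V=\closedop{(A\cap B)^c}^c$. Moreover $\corona X=\closedop{A^c}^c\cup\closedop{B^c}^c$ together with $U\cap V=\closedop{(A\cap B)^c}^c$ exhibits $U\cup V=\corona X$; note that $U\cup V$ plays the role of ``$\corona X$'' only because $A\cup B=X$ here, and in the displayed sequence $\cohomology i {A\cup B}\sheaff$ should be read via $A\cup B=X$, i.e. as $\cohomology i {\corona X}\sheaff$.

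Next I would invoke the classical Mayer--Vietoris long exact sequence for sheaf cohomology on a topological space: for a sheaf $\sheaff$ on $\corona X$ and an open cover $\corona X=U\cup V$ one has
\[
\cdots\to\cohomology{i-1}{U\cap V}\sheaff\to\cohomology i{U\cup V}\sheaff\to\cohomology i U\sheaff\times\cohomology i V\sheaff\to\cohomology i{U\cap V}\sheaff\to\cdots
\]
This is standard (it follows, for instance, from the short exact sequence of sheaves $0\to\sheaff_{U\cup V}\to\sheaff_U\oplus\sheaff_V\to\sheaff_{U\cap V}\to 0$ obtained by extension by zero, or equivalently from the \v{C}ech-to-derived-functor spectral sequence for the two-element cover). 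I would apply it to the sheaf on $\corona X$ that corresponds to the given sheaf $\sheaff$ on $X$ under the equivalence of Theorem~\ref{thm:sheaf}.

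Finally I would substitute the dictionary back. By Theorem~\ref{thm:sheaf} and Corollary~\ref{cor:cor2}, restricting the corresponding sheaf on $\corona X$ to the open set $\closedop{A^c}^c$ computes $\cohomology i A\sheaff$ (this uses that $\closedop{A^c}^c$, with the induced topology and the restricted sheaf, again satisfies the hypotheses of the correspondence — the sites involved are the evident slice sites, and the base $\{\closedop{S^c}^c:S\s A\}$ of $\closedop{A^c}^c$ matches the coarse covers of $A$). Likewise restriction to $\closedop{B^c}^c$ computes $\cohomology i B\sheaff$, restriction to $\closedop{(A\cap B)^c}^c=U\cap V$ computes $\cohomology i{A\cap B}\sheaff$, and the whole space $\corona X=U\cup V$ computes $\cohomology i {A\cup B}\sheaff=\cohomology i X\sheaff$. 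Feeding these identifications into the topological sequence yields exactly the displayed sequence.

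The main obstacle I expect is the bookkeeping needed to make the restriction statement precise: one must check that the equivalence of Theorem~\ref{thm:sheaf} is natural with respect to passing to the subobjects $A\s X$ and $\closedop{A^c}^c\s\corona X$, i.e. that the restriction functor from sheaves on $\corona X$ to sheaves on the open subspace $\closedop{A^c}^c$ corresponds, under the equivalence, to the restriction functor from sheaves on the coarse space $X$ to sheaves on the subspace $A$ (with its induced coarse structure and coarse covers), and that this compatibility is uniform enough that the connecting maps match up. Once this naturality is in hand, the long exact sequence is a purely formal consequence; the genuinely new content is entirely contained in Lemma~\ref{lem:coarsecovertopology} and Theorem~\ref{thm:sheaf}.
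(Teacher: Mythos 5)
Your proof is correct in substance but takes a genuinely different route from the paper. The paper's own argument is a two-line reduction: it realizes $\corona A$ and $\corona B$ as \emph{closed} subsets of $\corona X$ via Lemma~\ref{lem:closedembedding} and then cites the Mayer--Vietoris sequence for closed subspaces from Iversen. You instead work with the \emph{open} cover $\closedop{A^c}^c,\closedop{B^c}^c$ of $\corona X$ supplied by Lemma~\ref{lem:coarsecovertopology} and the open Mayer--Vietoris sequence. Your route buys two things the paper's leaves implicit: the fourth bullet of Lemma~\ref{lem:coarsecovertopology} hands you the identification of the intersection, $\closedop{A^c}^c\cap\closedop{B^c}^c=\closedop{(A\cap B)^c}^c$, whereas the closed-set version needs $\closedop A\cap\closedop B=\closedop{A\cap B}$, which is not automatic (a coarse ultrafilter containing both $A$ and $B$ need not contain $A\cap B$, since coarse ultrafilters are not closed under intersection); and the identification $\cohomology i A\sheaff\cong\cohomology i{\closedop{A^c}^c}{\tilde\sheaff}$ follows formally because $\Gamma(A,-)=\Gamma(\closedop{A^c}^c,-)\circ(\,\tilde{}\,)$ by Theorem~\ref{thm:sheaf} and derived functors commute with the equivalence of abelian categories --- exactly the naturality point you flag as the main obstacle, and it is the same point the paper's proof also needs and does not spell out. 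Two small corrections: the extension-by-zero sequence you quote should read $0\to\sheaff_{U\cap V}\to\sheaff_U\oplus\sheaff_V\to\sheaff_{U\cup V}\to 0$ and in any case computes compactly supported cohomology, so the \v Cech-to-derived-functor spectral sequence for the two-element open cover is the justification you should rely on; and $A\cup B$ need not literally equal $X$ (a coarse cover only forces $(A\cup B)^c$ to be bounded), but this is harmless since $\closedop{(A\cup B)^c}^c=\corona X$ regardless.
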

\begin{proof}
This is already \cite[Theorem~74]{Hartmann2017a}. Now Theorem~\ref{thm:sheaf} gives rise to an alternative proof: If $A,B\s X$ are two subsets of a metric space then $\corona A,\corona B$ can be realized as closed subsets of $\corona X$ by Lemma~\ref{lem:closedembedding}. By \cite[II.Mayer-Vietoris~sequence~5.6]{Iversen1984} there is a long exact Mayer-Vietoris sequence for every sheaf $\sheaff$ on $X$.
\end{proof}

\begin{cor}
\label{cor:cor5}
If $X$ is a proper metric space and the asymptotic dimension $n=\asdim(X)$ of $X$ is finite then
\[
\cohomology q X \sheaff=0
\]
for $q>n$ and $\sheaff$ a sheaf on $X$.
\end{cor}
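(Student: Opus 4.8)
The plan is to reduce everything to a dimension bound on the corona. By Corollary~\ref{cor:cor2} we have $\cohomology q X \sheaff=\cohomology q{\corona X}\sheaff$ for every sheaf $\sheaff$, so it suffices to show that the topological space $\corona X$ has sheaf cohomological dimension at most $n$, i.e.\ $\cohomology q{\corona X}\sheaff=0$ for $q>n$ and every sheaf $\sheaff$. Note that $\corona X$ is compact by Theorem~\ref{thm:compact}, and Hausdorff because the proximity relation $\close$ on $\corona X$ is \emph{separated}, so the induced topology is Tychonoff; hence $\corona X$ is compact Hausdorff, in particular paracompact. Since $X$ is proper, Proposition~\ref{prop:higson} identifies $\corona X$ with the Higson corona $\nu(X)$ as a topological space. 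Thus the claim splits into two facts about $\nu(X)\cong\corona X$: that its covering dimension is at most $n$, and that on a compact Hausdorff space a covering dimension bound of $n$ forces sheaf cohomology to vanish above degree $n$.

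For the second fact I would argue as follows. On a paracompact space \v{C}ech cohomology computes sheaf cohomology (see \cite{Iversen1984}); if $\dim Z\le n$ then finite open covers of multiplicity $\le n+1$ are cofinal among all open covers of the compact space $Z$, and for such a cover $\ucover$ the \v{C}ech cochain groups $C^q(\ucover;\sheaff)$ vanish for $q>n$ since every $(q+1)$-fold intersection of members of $\ucover$ is empty; taking the colimit over this cofinal system gives $\cohomology q Z\sheaff=0$ for $q>n$. For the first fact I would invoke the theorem of Dranishnikov that the covering dimension of the Higson corona of a proper metric space never exceeds its asymptotic dimension, so $\dim\corona X=\dim\nu(X)\le\asdim X=n$. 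Combining the two facts concludes the proof.

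There is also a route that avoids Proposition~\ref{prop:higson} and proves $\dim\corona X\le n$ directly from Lemma~\ref{lem:coarsecovertopology}: every finite open cover of the compact space $\corona X$ is refined by one of the form $(\closedop{U_i^c}^c)_i$ for a coarse cover $(U_i)_i$ of $X$, and by the last clause of that lemma the $(n+2)$-fold intersections of these open sets equal $\closedop{(U_{i_0}\cap\cdots\cap U_{i_{n+1}})^c}^c$, which is empty whenever $U_{i_0}\cap\cdots\cap U_{i_{n+1}}$ is bounded (no coarse ultrafilter contains a bounded set, by Lemma~\ref{lem:close}(1)). So it would be enough to show that coarse covers of $X$ all of whose $(n+2)$-fold intersections are bounded are cofinal among coarse covers. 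I expect \emph{this} cofinality statement to be the main obstacle: it is precisely the standard translation of ``$\asdim X\le n$'' into the existence of anti-\v{C}ech systems of multiplicity $\le n+1$, followed by the bookkeeping of matching such a system against a prescribed coarse cover. Since that translation is exactly the content of Dranishnikov's inequality, the first route — quoting it and feeding it into Corollary~\ref{cor:cor2} — is the cleaner one, and is the proof I would write up.
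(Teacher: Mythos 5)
Your main route is the same as the paper's: reduce to $\corona X$ via Corollary~\ref{cor:cor2}, note compactness gives paracompactness, identify $\corona X$ with $\nu(X)$ by Proposition~\ref{prop:higson}, quote Dranishnikov's bound $\dim(\nu X)\le\asdim(X)$, and use the fact (which the paper cites from Godement and you re-derive via \v{C}ech cohomology) that covering dimension bounds sheaf cohomological dimension on a paracompact space. The proposal is correct and essentially identical to the paper's proof.
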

\begin{proof}
Note that $\corona X$ is paracompact since $\corona X$ is compact. By \cite[Chapitre~II.5.12]{Godement1958} it is sufficient to show that the covering dimension of $\corona X$ does not exceed $n$. Since \cite[Theorem~1.1]{Dranishnikov1998} showed that $\dim(\nu X)\le \asdim(X)$ and Proposition~\ref{prop:higson} showed $\corona X=\nu X$ are homeomorphic the result follows.
\end{proof}

\section{On Morphisms}

\begin{lem}
\label{lem:closedembedding}
If $f:X\to Y$ is a coarsely injective coarse map between metric spaces then $\corona f$ is a closed embedding.
\end{lem}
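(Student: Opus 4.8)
The plan is to show that $\corona f$ is injective, and that it maps closed sets to closed sets; since $\corona X$ is compact and $\corona Y$ is Hausdorff (it carries a separated proximity), either one of these suffices, but it is cleanest to verify injectivity directly from the coarse-injectivity hypothesis and then invoke compactness.

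First I would unwind what coarse injectivity gives: for every entourage $F\s Y^2$ the set $\izp f F$ is an entourage in $X$. The key consequence I want is a statement about subsets: \emph{if $A,B\s X$ satisfy $f(A)\close f(B)$ in $Y$, then $A\close B$ in $X$.} Indeed, if $f(A)\close f(B)$ then they are not coarsely disjoint, so there is an entourage $E\s Y^2$ with $E[f(A)]\cap E[f(B)]$ unbounded; pulling this back through $f$ (using coarse properness so that unbounded sets have unbounded preimages, and coarse injectivity so that $\izp f E$ controls the preimage) produces an entourage $E'\s X^2$ with $E'[A]\cap E'[B]$ unbounded, hence $A\close B$. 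This is the analogue, for coarsely injective maps, of Remark~\ref{rem:coarsecontinuous}, and it is the technical heart of the argument.

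Next I would use this to prove injectivity of $\corona f$. Suppose $[\sheaff]\neq[\sheafg]$ in $\corona X$, i.e.\ $\sheaff\bar\lambda\sheafg$, so there exist $A\in\sheaff$, $B\in\sheafg$ with $A\notclose B$. I claim $f_*\sheaff\bar\lambda f_*\sheafg$. If not, then $f_*\sheaff\lambda f_*\sheafg$, so for every $C\in f_*\sheaff$, $D\in f_*\sheafg$ we have $C\close D$. Now $A\in\sheaff$ means $A\s\iip f{f_*\sheaff}$... more carefully: since $\sheaff$ is a coarse ultrafilter, from $A\in\sheaff$ I can find $C\s Y$ with $\iip f C\in\sheaff$ and $A\close \iip f C$ — in fact take $C=\overline{f(A)}$ or rather use that $f(A)\in f_*\sheaff$ when $A\in\sheaff$ (since $\iip f{f(A)}\supseteq A$), similarly $f(B)\in f_*\sheafg$. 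Then $f(A)\close f(B)$, and by the contrapositive of the boxed statement above, $A\close B$, contradicting $A\notclose B$. Hence $\corona f$ is injective.

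Finally, injectivity plus continuity (Lemma~\ref{lem:fE(f)}) of a map from the compact space $\corona X$ (Theorem~\ref{thm:compact}) to the Hausdorff space $\corona Y$ forces $\corona f$ to be a closed map, hence a homeomorphism onto its image, i.e.\ a closed embedding. The main obstacle I anticipate is the bookkeeping in the boxed implication: transporting the unbounded "witness'' set $E[f(A)]\cap E[f(B)]$ back through $f$ while correctly using both coarse properness and coarse injectivity, and making sure the resulting preimage is genuinely unbounded rather than merely nonempty. Everything downstream is a formal consequence of that lemma together with compactness.
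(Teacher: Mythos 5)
Your route is genuinely different from the paper's. The paper reduces to the case of an inclusion $i:Z\to X$ (injectivity of $\corona i$ being then immediate) and identifies the image explicitly as the closed set $\closedop Z$ by restricting any coarse ultrafilter containing $Z$ back to $Z$; you instead prove injectivity of $\corona f$ directly and let compactness of $\corona X$ (Theorem~\ref{thm:compact}) plus separatedness of the proximity on $\corona Y$ finish the job. That topological endgame is sound, and your key implication --- coarse injectivity forces $f(A)\close f(B)\Rightarrow A\close B$ --- is true; it is in effect the unstated converse of Lemma~\ref{lem:notclosecoarselyinjective}. One correction to its bookkeeping: ``unbounded sets have unbounded preimages'' is not the right tool (the preimage of an unbounded set may be empty). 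Rather: pick an unbounded set of points $y_n\in E[f(A)]\cap E[f(B)]$, choose $a_n\in A$, $b_n\in B$ with $(f(a_n),y_n),(f(b_n),y_n)\in E$; then $(a_n,b_n)\in\izp f{(E\circ E^{-1})}=:E'$ is an entourage by coarse injectivity, and $\{a_n\}\s E'[A]\cap E'[B]$ is unbounded because $\{f(a_n)\}$ is unbounded and $f$, being coarsely uniform, maps bounded sets to bounded sets.

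The one genuine gap is the assertion ``$f(A)\in f_*\sheaff$ when $A\in\sheaff$, since $\iip f{f(A)}\supseteq A$.'' This needs coarse ultrafilters to be upward closed, which is not among the axioms of Definition~\ref{defn:cu} and is not proved anywhere in the paper: from $A\in\sheaff$ and $A\s C$, Lemma~\ref{lem:cubasicprops} and axiom 1 only yield that either $C\in\sheaff$, or else $C^c\in\sheaff$ and $A\close C^c$ --- and the second alternative is not absurd, since disjoint sets can be close. The gap is repairable: the upward closure $\sheaff'=\{C\s X:\ A\s C\text{ for some }A\in\sheaff\}$ is again a coarse ultrafilter (all three axioms follow using Lemma~\ref{lem:close}) and satisfies $\sheaff'\lambda\sheaff$, so you may replace $\sheaff$ and $\sheafg$ by their upward closures without changing their classes in $\corona X$ or the classes of their pushforwards (Lemma~\ref{lem:closesamemap} with $f=g$); after that substitution your injectivity argument goes through. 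But as written the step is an unproved claim, and it is precisely the point where the paper's proof is more careful (it only ever invokes axiom 1 on sets actually known to lie in the ultrafilter).
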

\begin{proof}
We show if $i:Z\to X$ is an inclusion of metric spaces then $\corona i$ is a closed embedding.

First we prove $\corona i$ is injective: Let $\sheaff,\sheafg$ be two coarse ultrafilters on $Z$. If $i_*\sheaff\lambda i_*\sheafg$ then $\sheaff\lambda\sheafg$ obviously.

Now we prove $\corona i (\corona Z)=\closedop Z$. If $\sheaff$ is a coarse ultrafilter on $X$ with $Z\in\sheaff$ then define
\[
\sheaff|_Z:=\{A\s Z: A\in\sheaff\}.
\]
Then $\sheaff|_Z$ is obviously a coarse ultrafilter.

Now we show $i_*\sheaff|_Z\lambda \sheaff$: if $A\in i_*\sheaff|_Z,B\in\sheaff$ then $A\cap Z\in\sheaff$. Thus $A\close B$.
\end{proof}

\begin{thm}
\label{thm:surjectivecoarselysurjective}
If $f:X\to Y$ is a coarse map between coarsely proper metric spaces and if $\corona f$ is surjective then $f$ is coarsely surjective.
\end{thm}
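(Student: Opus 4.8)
The plan is to argue the contrapositive: assuming $f$ is not coarsely surjective, I will exhibit a point of $\corona Y$ lying outside the image of $\corona f$. (The case in which $Y$ is bounded is degenerate — then $\corona Y=\emptyset$ and $f$ is coarsely surjective as soon as $X\neq\emptyset$ — so I assume $Y$ unbounded.) The organizing observation is that $\im\corona f\s\closedop{\im f}$: for any coarse ultrafilter $\sheaff$ on $X$ one has $\iip f{\im f}=X\in\sheaff$, hence $\im f\in f_*\sheaff$ and $[f_*\sheaff]\in\closedop{\im f}$. So it suffices to build a coarse ultrafilter $\sheafg$ on $Y$ containing some set $B$ that is coarsely disjoint from $\im f$, i.e.\ $B\notclose\im f$: for such a $\sheafg$ one gets $[\sheafg]\notin\closedop{\im f}$, since $f_*\sheaff\lambda\sheafg$ would, by Definition~\ref{defn:asymptoticallyalikecu} applied to $\im f\in f_*\sheaff$ and $B\in\sheafg$, force $\im f\close B$.

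The substantive step is the construction of $B$. Coarse non-surjectivity means $E[\im f]\neq Y$ for every entourage $E$, so, applying this to the entourages $E_n=\{(y,y')\in Y^2:d(y,y')\le n\}$, I can pick $b_n\in Y\ohne E_n[\im f]$ — equivalently $d(b_n,f(x))>n$ for every $x\in X$ — and set $B=\{b_n:n\in\N\}$. This $B$ is unbounded (the $b_n$ recede from any fixed $f(x_0)$; in the edge case $\im f=\emptyset$ one simply chooses them to escape, using that $Y$ is unbounded). That $B$ is coarsely disjoint from $\im f$ is a one‑line triangle‑inequality estimate: if an entourage $E$ is controlled by $k$ and $z\in E[B]\cap E[\im f]$, then $d(b_n,f(x))\le 2k$ for some $n$ and some $x$, forcing $n<2k$; hence $z$ lies in the bounded set obtained by applying $E$ to the finitely many $b_n$ with $n<2k$, so $E[B]\cap E[\im f]$ is bounded for every $E$.

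Finally, since $Y$ is coarsely proper and $B\s Y$ is unbounded, Theorem~\ref{thm:cuexists} furnishes a coarse ultrafilter $\sheafg$ on $Y$ with $B\in\sheafg$, and the first paragraph then gives $[\sheafg]\notin\im\corona f$, so $\corona f$ is not surjective. I expect the only real obstacle to be the middle step — distilling the scale‑by‑scale failure of coarse surjectivity into a single unbounded set that is coarsely disjoint from $\im f$ \emph{uniformly in the scale} — after which the existence theorem for coarse ultrafilters plus routine bookkeeping finish the argument. Note that only coarse properness of $Y$ (not of $X$) is used.
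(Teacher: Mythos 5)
Your proof is correct and follows essentially the same route as the paper: pass to the contrapositive, produce an unbounded subset of $Y$ coarsely disjoint from $\im f$, feed it to Theorem~\ref{thm:cuexists}, and conclude that the resulting class misses $\im\corona f$. The only difference is that you supply details the paper leaves implicit — the explicit construction of the set $B$ via the points $b_n$, and the check that no ultrafilter asymptotically alike to one in $\im\corona f$ can contain $B$ — both of which are welcome additions rather than deviations.
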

\begin{proof}
 If $f:X\to Y$ is not coarsely surjective then there exists a nonbounded subspace $Z\s Y$ such that $f(X)\notclose Z$. By Theorem~\ref{thm:cuexists} there exists a coarse ultrafilter $\sheaff$ on $Y$ with $Z\in\sheaff$. This implies $f(X)\not\in\sheaff$. Thus $\sheaff\not\in \im \corona f$.
\end{proof}

\begin{lem}
\label{lem:notclosecoarselyinjective}
A coarse map $f:X\to Y$ between metric spaces is coarsely injective if for every two subsets $A,B\s X$ the relation $A\not\close B$ implies $f(A)\notclose f(B)$.  
\end{lem}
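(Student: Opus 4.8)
The plan is to prove the contrapositive: if $f$ fails to be coarsely injective I will exhibit subsets $A,B\s X$ with $A\notclose B$ while $f(A)\close f(B)$, contradicting the assumption. Since every entourage of $Y$ lies inside one of the form $E_r=\{(a,b)\in Y^2:d(a,b)\le r\}$, failure of coarse injectivity produces an $r>0$ for which $\izp f{E_r}$ is not an entourage of $X$, hence pairs $(x_n,y_n)$ with $d(x_n,y_n)>n$ and $d(f(x_n),f(y_n))\le r$ for all $n$. Fix a basepoint $p\in X$. From $d(x_n,y_n)\le d(x_n,p)+d(y_n,p)$ we get $\max(d(x_n,p),d(y_n,p))\to\infty$, so after passing to a subsequence and possibly interchanging the two sequences I may assume $d(x_n,p)\to\infty$; in particular $\{x_n:n\}$ is unbounded, so $\{f(x_n):n\}$ is unbounded because $f$ is coarsely proper.

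First I note that the image condition $f(A)\close f(B)$ is automatic for $A=\{x_n:n\}$ and $B=\{y_n:n\}$, and stays automatic after any further passage to a subsequence: each $f(x_n)$ lies in $E_r[f(B)]$ (via the point $f(y_n)\in f(B)$) and trivially in $E_r[f(A)]$, so $E_r[f(A)]\cap E_r[f(B)]$ contains the unbounded set $\{f(x_n):n\}$, which is exactly $f(A)\close f(B)$. So the whole task is to thin out the sequence so that $A$ and $B$ become coarsely disjoint. If $\{y_n:n\}$ is bounded --- or, after the interchange, if $\{x_n:n\}$ is bounded --- this is immediate from Lemma~\ref{lem:close}(1); so after one more subsequence I may assume $d(x_n,p)\to\infty$, $d(y_n,p)\to\infty$ and $d(x_n,y_n)\to\infty$ all hold.

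In this remaining situation I select a sparse subsequence greedily. Writing $\rho_n=d(x_n,p)+d(y_n,p)+d(x_n,y_n)$, I choose $n_1<n_2<\cdots$ so that at each stage $\min\bigl(d(x_{n_{k+1}},p),d(y_{n_{k+1}},p),d(x_{n_{k+1}},y_{n_{k+1}})\bigr)>3\max_{i\le k}\rho_{n_i}+k$, which is possible since all three quantities tend to $\infty$. Set $A=\{x_{n_k}:k\}$ and $B=\{y_{n_k}:k\}$. For a fixed $s>0$, a point of $\{x:d(x,A)\le s\}\cap\{x:d(x,B)\le s\}$ forces $d(x_{n_k},y_{n_l})\le 2s$ for some $k,l$; the three cases $k=l$, $k<l$, $k>l$ are each handled by combining the triangle inequality (e.g.\ $d(y_{n_l},p)\le 2s+d(x_{n_k},p)$ when $k<l$) with the growth built into $\rho$, and each bounds $k$, $l$ and the distance of the point from $p$. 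Hence that intersection is bounded for every $s$, i.e.\ $A\notclose B$, contradicting $f(A)\close f(B)$ and the hypothesis.

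I expect the bookkeeping of the last paragraph to be the only real obstacle: one must check that the greedy growth condition genuinely forces, for each $s$, only finitely many admissible index pairs $(k,l)$ and confines the corresponding points to a single bounded set. Everything else --- the reduction to the case where both distance sequences diverge, and the image-side argument --- is routine once the $s$-entourage notation is in place.
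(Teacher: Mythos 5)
Your proposal is correct, and at the top level it follows the same strategy as the paper: negate coarse injectivity to produce pairs $(x_n,y_n)$ with $d(x_n,y_n)>n$ and $d(f(x_n),f(y_n))\le r$, then contradict the hypothesis by exhibiting subsets $A,B$ with $A\notclose B$ but $f(A)\close f(B)$. The difference is in how the contradiction is reached: the paper stops at this point and cites \cite[Lemma~2.2, Theorem~2.3]{Kalantari2016} to extract the coarsely disjoint subsequences, whereas you carry out that extraction explicitly with the greedy choice of indices governed by $\rho_n=d(x_n,p)+d(y_n,p)+d(x_n,y_n)$. Your bookkeeping in the final step does close: in each of the three cases $k=l$, $k<l$, $k>l$ the inequality $d(x_{n_k},y_{n_l})\le 2s$ together with the growth condition bounds the admissible indices and confines the witnessing point to a bounded neighbourhood of $p$ or of finitely many of the $x_{n_k}$, so $E_s[A]\cap E_s[B]$ is bounded for every $s$ and hence $A\notclose B$. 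The net effect is a self-contained proof where the paper's is essentially a citation, which is a genuine gain in this context. One small remark: the sub-case in which one of the two sequences is bounded is actually vacuous rather than ``immediate from Lemma~\ref{lem:close}(1)'' --- once $\{x_n\}$ is unbounded, coarse properness of $f$ and $d(f(x_n),f(y_n))\le r$ force $\{f(y_n)\}$, and hence $\{y_n\}$, to be unbounded as well --- but either reading of that step leaves the argument intact.
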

\begin{proof}
The proof is similar to the proof of \cite[Theorem~2.3]{Kalantari2016}. Let $f$ have the above property. Then for every two subsets $A,B\s X$ the relation $A\bar\lambda B$ in $X$ implies $f(A)\bar \lambda f(B)$ in $Y$. Assume the opposite, $f$ is not coarsely injective. Then there is some $r\ge 0$ and a sequence $(x_n,y_n)_n\s X^2$ such that $d(f(x_n),f(y_n))<r$ and $d(x_n,y_n)>n$. Now the sequences $(x_n)_n,(y_n)_n$ satisfy the hypothesis of \cite[Lemma~2.2]{Kalantari2016} which leads to a contradiction. 
\end{proof}

\section{Side Notes}
\begin{rem}\name{Space of Ends}
Let $X$ be a metric space. The relation $\sim$ on $\corona X$ which is defined by belonging to the same topological connection component is an equivalence relation. Similarly as in \cite[Side Notes]{Hartmann2017b} we obtain the space of ends by Freudenthal of $X$, if we assume $X$ to be proper geodesic.
\end{rem}

\begin{rem}
It has been shown in \cite{Kalantari2016} that the Higson corona $\nu X$ of a proper metric space $X$ arises as the boundary of the Smirnov compactification of the proximity space $(X,\delta_\lambda)$. Then \cite[Theorem~7.7]{Naimpally1970} implies that $A\delta_\lambda B$ if and only if $A\cap B\not=\emptyset$ or $(\bar A\cap \nu X)\cap (\bar B\cap \nu X)\not=\emptyset$ for every subsets $A,B\s X$. Thus if $A\cap B=\emptyset$ then $A\notclose B$ if and only if $(\bar A\cap \nu X)\cap(\bar B\cap \nu X)=\emptyset$.
\end{rem}

\begin{lem}
Let $X$ be a metric space. A finite family of sets $(U_i)_i$ is a p-cover of $(X,\delta_\lambda)$ if and only if $(U_i)_i$ is a coarse cover of $X$ such that $\bigcup_i U_i=X$.
\end{lem}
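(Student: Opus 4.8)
The plan is to unwind the definition of a p-cover of the (almost-)proximity space $(X,\delta_\lambda)$ and match it against Lemma~\ref{lem:pcover}. Recall that a finite family $(U_i)_i$ is a p-cover of a proximity space precisely when there is a finite family $(V_i)_i$, indexed by the same set, with $\bigcup_i V_i=X$ and $V_i\ll U_i$ for every $i$, where $V\ll U$ means $V\mathrel{\bar\delta}(X\setminus U)$. For $\delta=\delta_\lambda$ we have $A\mathrel{\delta_\lambda}B$ iff $A\close B$ or $A\cap B\neq\emptyset$, so $V_i\ll_{\delta_\lambda}U_i$ is equivalent to the conjunction $V_i\notclose U_i^c$ and $V_i\s U_i$. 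Hence the first step is just this translation: $(U_i)_i$ is a p-cover of $(X,\delta_\lambda)$ if and only if there is a finite set-cover $(V_i)_i$ of $X$ with $V_i\s U_i$ and $V_i\notclose U_i^c$ for every $i$.

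For the forward implication, from such a $(V_i)_i$ one gets $\bigcup_i U_i\supseteq\bigcup_i V_i=X$ because $V_i\s U_i$; and $(V_i)_i$ is then exactly the witness required by Lemma~\ref{lem:pcover}, so $(U_i)_i$ is a coarse cover. For the backward implication, suppose $(U_i)_i$ is a coarse cover with $\bigcup_i U_i=X$. Lemma~\ref{lem:pcover} provides a finite set-cover $(V_i)_i$ of $X$ with $V_i\notclose U_i^c$, but the members $V_i$ need not lie inside $U_i$, and this is where the extra hypothesis is used. I would observe that $V_i\setminus U_i=V_i\cap U_i^c$ is contained in both of the coarsely disjoint sets $V_i$ and $U_i^c$, hence is bounded (apply the entourage $\Delta_X$ in the definition of coarsely disjoint); thus $B:=\bigcup_i(V_i\setminus U_i)$ is bounded. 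Put $W_i:=(V_i\cap U_i)\cup(B\cap U_i)$. Then $W_i\s U_i$; moreover $\bigcup_i(V_i\cap U_i)\supseteq X\setminus B$ while $\bigcup_i(B\cap U_i)=B\cap\bigcup_i U_i=B$, so $\bigcup_i W_i=X$; and $W_i\notclose U_i^c$ since $V_i\cap U_i\s V_i\notclose U_i^c$ and $B\cap U_i$ is bounded hence close to nothing by Lemma~\ref{lem:close}(1), so the claim follows from Lemma~\ref{lem:close}(3). Therefore $(W_i)_i$ witnesses that $(U_i)_i$ is a p-cover of $(X,\delta_\lambda)$.

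The main obstacle I anticipate is exactly this shrinking step in the backward direction: a coarse cover only yields a set-cover whose members are far (in the $\close$ sense) from the complements $U_i^c$, not a set-cover sitting inside the $U_i$, which is what the p-cover definition demands. Recognizing that the overhang $V_i\setminus U_i$ is automatically bounded, and that this bounded remainder can be redistributed back into the $U_i$ using $\bigcup_i U_i=X$ without disturbing the non-closeness to $U_i^c$, is the crux of the argument — and it also makes transparent why the hypothesis $\bigcup_i U_i=X$ cannot be dropped.
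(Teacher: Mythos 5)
Your proof is correct, and in the backward direction it takes a genuinely different route from the paper. The forward direction coincides with the paper's (translate $V_i\mathbin{\bar\delta_\lambda}U_i^c$ into $V_i\notclose U_i^c$ plus $V_i\s U_i$, invoke Lemma~\ref{lem:pcover}, and deduce $\bigcup_i U_i=X$ from $V_i\s U_i$). For the converse, the paper argues by induction on the number of cover components: the case $n=2$ is handled by an explicit redistribution $V_1:=(V_1'\cap U_1)\cup(V_2'\cap U_2^c)$, $V_2:=(V_2'\cap U_2)\cup(V_1'\cap U_1^c)$, and the inductive step merges two members $U,V$ into $U\cup V$ and then splits again. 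You instead give a uniform, induction-free argument: the overhang $V_i\setminus U_i=V_i\cap U_i^c$ is bounded because $V_i$ and $U_i^c$ are coarsely disjoint and $\Delta_X$ is an entourage, so the total overhang $B$ is a finite union of bounded sets, hence bounded, and can be pushed back into the $U_i$ (using $\bigcup_i U_i=X$) without disturbing $\notclose U_i^c$, by Lemma~\ref{lem:close}(1) and (3). This buys two things: it avoids the case analysis and the merge-and-split bookkeeping of the induction, and it isolates exactly where the hypothesis $\bigcup_i U_i=X$ enters (the bounded remainder must land somewhere inside the $U_i$). It is worth noting that the same boundedness observation is also silently needed in the paper's $n=2$ step, where $V_1\notclose U_1^c$ requires knowing that $V_2'\cap U_2^c$ is bounded; your write-up makes that point explicit, which is a small improvement in transparency.
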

\begin{proof}
Suppose $(V_i)_i$ is a cover of $X$ such that $V_i\bar\delta_\lambda U_i^c$ for every $i$. This implies $V_i\notclose U_i^c$ for every $i$. By Lemma~\ref{lem:pcover} the $(U_i)_i$ are a coarse cover. Furthermore
\f{
\bigcap_i U_i^c
&=\bigcap_i U_i^c\cap \bigcup_i V_i\\
&=\bigcup_i(V_i\cap\bigcap_j U_j^c)\\
&=\emptyset.
}
Thus $\bigcup_i U_i=X$.

Now suppose $(U_i)_i$ are a coarser cover of $X$ with $\bigcup_i U_i=X$. We proceed by induction on the number of components of $(U_i)_i$:
\begin{itemize}
\item $n=1$: The set $X$ coarsely covers $X$.
\item $n=2$: Suppose $U_1,U_2$ coarsely cover $X$ with $U_1\cup U_2=X$. By Lemma~\ref{lem:pcover} we can choose $V_1',V_2'\s X$ with $V_1'\cup V_2'=X$ and $V_1'\notclose U_1^c,V_2'\notclose U_2^c$. Then choose
\[
V_1:=(V_1'\cap U_1)\cup (V_2'\cap U_2^c),V_2:=(V_2'\cap U_2)\cup (V_1'\cap U_1^c)
\]
Then $V_1,V_2$ are a cover of $X$ with $V_1\notclose U_1^c,V_2\notclose U_2^c$ and since $U_1^c\cap U_2^c=\emptyset$ we have
\[
V_1\cap U_1^c=\emptyset, V_2\cap U_2^c=\emptyset
\]
\item $n+1\to n+2$: Suppose $U,V,U_1,\ldots,U_n$ coarsely cover $X$. Then $U,V$ coarsely cover $U\cup V$ and $U\cup V,U_1,\ldots,U_n$ coarsely cover $X$. By induction hypothesis we have subsets $V_1',V_2'\s U\cup V$ with $V_1'\cup V_2'=U\cup V$ and $V_1'\bar\delta_\lambda U^c\cap V,V_2'\bar\delta_\lambda V^c\cap U$. And there are subsets $W,V_1,\ldots,V_n$ with $W\bar\delta_\lambda(U\cup V)^c, V_i\bar\delta_\lambda U_i^c$ for every $i$. Then
\[
V_1'\cap W\bar\delta_\lambda U^c,V_2'\cap W\bar\delta_\lambda V^c.
\]
Then
\[
V_1'\cap W,V_2'\cap W,V_1,\ldots,V_n
\]
is a cover of $X$ with the desired properties.
\end{itemize}
\end{proof}

\bibliographystyle{plain}
\bibliography{mybib}

\address

\end{document}